\newtheorem{thm}{Theorem}[section]
\newtheorem{lem}[thm]{Lemma}
\newtheorem{rem}[thm]{Remark}
\newtheorem{prop}[thm]{Proposition}
\newtheorem{cor}[thm]{Corollary}
\newtheorem{example}[thm]{Example}
\newdefinition{defn}[thm]{Definition}
\begin{document}
	\begin{frontmatter}
	
		\title{ Optimal Dual Frame Pairs: A Synergy with Graph Theory}
		\author{Shankhadeep Mondal}
		\ead{shankhadeep.mondal@ucf.edu}
		\author{Ram Narayan Mohapatra}
		\ead{Ram.Mohapatra@ucf.edu}

		\address{School of Mathematics, University of Central Florida, Orlando, Florida-32816}

		\begin{abstract}
			This paper investigates the optimization of dual frame pairs in the context of erasure problems in data transmission, using a graph theoretical approach. Frames are essential for mitigating errors and signal loss due to their redundancy properties. We address the use of spectral radius and operator norm for error measurements, presenting conditions for the optimality of dual pairs for one and two erasures. Our study shows that a tight frame generated by connected graphs and its canonical dual pair is optimal for one-erasure scenarios. Additionally, we compute the spectral radius of the error operator for one and two erasures in graph-generated frames, establishing necessary conditions for dual pair optimality.
		\end{abstract}

		\begin{keyword}
			Erasures, Frames, Optimal dual pair, Simple graphs, Laplacian matrix
			\MSC[2010] 42C15, 47B02, 94A12
		\end{keyword}
		
	\end{frontmatter}
	
	\section{Introduction}
	
	The utilization of frames in data transmission has gained prominence due to their inherent redundancy properties, which play a critical role in ensuring the integrity and accuracy of transmitted data. During the transmission process, various adverse global network conditions, such as congestion and limitations of the transmission channel, can cause errors. These errors may lead to the loss of certain parts of the encoded coefficients of a signal, resulting in what is known as erasures. The presence of redundancy within frames helps to counteract these erasures and enhances the accuracy of the reconstructed signal.
	
	In recent years, significant research efforts have been dedicated to addressing the problem of erasures by applying frame theory. One notable study by Casazza and Kovačević \cite{casa2} has delved into equal norm tight frames, exploring their properties, construction, and robustness to erasures in detail. Their work has provided a comprehensive understanding of how these frames can withstand erasures and maintain signal integrity. Another important contribution is from Goyal, Kovačević, and Kelner \cite{goya}, who examined uniform tight frames from the perspective of coding theory. They identified that these frames are optimal for one erasure, providing a solid foundation for further exploration in this area.
	
	Optimal dual frames are particularly crucial in minimizing the maximum error introduced by erasures. Holmes and Paulsen \cite{holm} introduced the concept of using the operator norm to measure the optimality of frames in the context of erasures. They focused on Parseval frames $F$ and their canonical dual $F$, analyzing the necessary and sufficient conditions for the dual pair $(F, F)$ to minimize the maximum operator norm of reconstruction error operators associated with all possible error locations.
	
	The problem of minimizing reconstruction error has also been investigated from different angles. In \cite{bodm1}, the numerical measure of reconstruction error was taken as the average of the operator norms of error operators across all possible combinations of erasures of a given length. By minimizing this measure, a more robust solution to the erasure problem was sought. Lopez and Han \cite{jerr} discussed a sufficient condition for the canonical dual to be the unique optimal dual, under the operator norm, for one erasure and extended this condition to multiple erasures. Their study also examined the topological properties of the set of all optimal duals of a given frame and provided various characterizations for optimality in this context \cite{jins}.	Further contributions to this field include the work of Pehlivan, Han, and Mohapatra \cite{sali}, who focused on characterizing optimal dual frames that minimize the maximum spectral radii of the error operators pertaining to each possible single erasure. This problem for the case of two erasures has been analysed in \cite{peh, dev}. A similar problem with the measure of the error operator being numerical radius is analysed in \cite{ara} and that with the average of its operator norm and its numerical radius is studied in \cite{deep}. In \cite{bodm1}, the authors introduced a new measure for the error operator, averaging over all possible error positions instead of minimizing the maximum error. In this paper, we adopt the same measurement for the error operator, but our problem differs significantly. Specifically, we study all pairs of frames and their duals, rather than the restricted class of Parseval frames and their canonical duals. We also provide characterizations of optimal dual pairs in the context of graph theory for spectral radius and operator norm as the error measurement.
	
	Given a probability distribution for error locations in frame expansions, \cite{leng3} introduces a weight number sequence to formulate the error operator and establish conditions for dual frame optimality. Additionally, simpler sufficient conditions are provided in \cite{li1}. The search for an optimal dual pair among all Parseval frames and their canonical dual frames is discussed in \cite{leng, li}, while \cite{shan} analyzed both the optimal dual pair among all pairs and the optimal dual frame for a given frame. 
	
	Graph theory is a well-established branch of mathematics known for its rich theoretical foundation and diverse practical applications. It plays an essential role in various real-life scenarios, such as measuring the connectivity of communication networks, facilitating hierarchical clustering, ranking hyperlinks in search engines, and aiding in image segmentation (see \cite{nisan, biggs, shi}). The	concept of frames generated by graphs in finite-dimensional Hilbert spaces is studied in \cite{deep1}. Spectrally optimal dual frames (in short SOD) of a given frames generated by graphs is studied in \cite{deep2}. For those interested in a comprehensive study of matrices associated with simple graphs and other graph classes, the works referenced in \cite{bapat, bapat1, guo, mehat, zas}  offer detailed insights and further reading. In this paper, we aim to investigate the existence and characterization of a dual pair that minimizes, among all dual pairs, the  error operator’s measure under spectral radius and operator norm as the error measurement. We also give a graph theoretic approach, using Laplacian matrix of a graph and Gramian matrix of a frame.
	
The outline of this paper is as follows. In Section 2 of this paper is devoted to the preliminaries and crucial inequalities that are useful for the subsequent sections, Sections 3 and 4 address the problem of using spectral radius and operator norm as error measurements, respectively. More specifically, in Section 3, we present equivalent conditions for determining the optimality of dual pairs under the spectral radius for one and two erasures. A similar problem, in connection with the operator norm for one erasure, is discussed in Section 4. In Section 5, we study spectrally optimal dual pairs generated by graphs for 1 and 2 erasures. We demonstrate that the canonical dual frames of frames generated by connected graphs are the unique spectrally optimal dual frames for 1-erasure. We first compute the spectral radius of the error operator for 1 and 2 erasures of frames generated by graphs. We then establish certain necessary conditions under which a dual pair becomes optimal under both the spectral radius and operator norm.
	
	\section{Preliminaries }
	
	Let $\mathcal{H}$  be an $n-$dimensional Hilbert space.  A finite sequence $F= \{f_i\}_{i=1}^N$ of vectors  in $\mathcal{H}$ is called a \textit{frame} for $\mathcal{H}$ if there exist constants $A,B >0$ such that
	$$\displaystyle{A \left\| f \right\|^2\leq \sum_{i=1}^N\big|\langle f,f_i\rangle\big|^2 \leq B\left\| f \right\|^2}, \,\text{for all}\; f\in \mathcal{H}.$$
	
	\noindent
	The constants $A$ and $B$ are called lower and upper frame bounds respectively.  The  \textit{optimal lower frame bound} is the supremum over all lower frame bounds and the \textit{optimal upper frame bound} is the infimum over all upper frame bounds. If $A=B,$ i.e.  $\displaystyle{\sum_{i=1}^N\big| \langle f,f_i \rangle \big|^2 = A\left\| f \right\|^2},$ for all $f\in \mathcal{H},$  then $\{f_i\}_{i=1}^N$ is called a \textit{tight frame}.
	If $A=B=1,$ then $\{f_i\}_{i=1}^N$  is called a \textit{Parseval frame}. A frame $F$ is said to be \textit{uniform frame} if norm of each frame vector are same and if additionally $\|f_i\| = 1$ for every $i$, then $F$ is called \textit{unit norm} frame. Given a frame $F$ for $\mathcal{H},$ the linear mapping $\Theta_F:\mathcal{H} \to  {\mathbb{C}^N}, $ defined by $$\Theta_F(f)= \{\langle f,f_i \rangle\}_{i=1}^N $$  is called the  \textit{analysis operator} corresponding to $F$ and its adjoint operator   $ \Theta_{F}^*: \mathbb{C}^N \to \mathcal{H}, $  given by
	$$\Theta_{F}^*\left(\{c_i\}_{i=1}^N\right) =  \sum_{i=1}^N {c_i f_i} $$ is called the \textit{synthesis operator or preframe operator} associated with the frame $F$.
	The operator $S_F : \mathcal{H}\to \mathcal{H} $  defined by
	$$S_{F}f= \Theta_{F}^{*} \Theta_{F} f = \sum_{i=1}^N \langle f,f_i \rangle f_i $$
	is called the \textit{frame operator} associated with $F.$
	It is well known that $S_F$ is a positive, self-adjoint and invertible operator on $\mathcal{H}.$	A frame $G=\{g_i\}_{i=1}^N$ for $\mathcal{H}$ is called a dual frame of $F= \{f_i\}_{i=1}^N$ if every element $f \in  \mathcal{H}$ can be written as
	\begin{align} \label{eqn2point1reconstruction}
		f = \sum_{i=1}^N\langle f,f_i \rangle g_i,  \;\;   \forall f\in \mathcal{H}.
	\end{align}
	This implies that $\Theta_{G}^* \Theta_{F} = I$ and hence, $\Theta_{F}^* \Theta_{G} = I.$ So, we also have $	f = \sum_{i=1}^N\langle f,g_i \rangle f_i,  \;\;   \forall f\in \mathcal{H}.$ In other words, $F$ is a dual of $G$ if and only if $G$ is a dual of $F.$ Such a pair $(F,G)$  of $N$ elements each is called an $(N,n)$ dual pair for the $n-$dimensional Hilbert space $\mathcal{H}.$ It is known that for any frame $F,$ the frame $\{S_{F}^{-1}f_i\}_{i=1}^{N}$ is  a dual frame of $F$ and is called the canonical or standard dual frame of $F$. In fact, it is the only dual for a frame $F,$ when $F$ is a basis.  On the other hand, if $F$ is not a basis, then  there exist infinitely many dual frames  $G$ for $F$ and every dual frame $G=\{g_i\}_{i=1}^N$ of F is of the form $G=\{S_{F}^{-1}f_i +u_i\}_{i=1}^N,$ where the sequence $\{u_i\}_{i=1}^N$ satisfies $$\sum_{i=1}^N \langle f,u_i \rangle f_i = \sum_{i=1}^N \langle f,f_i \rangle u_i  = 0, \;\;   \forall f\in \mathcal{H}.$$
	
	\noindent
	Another interesting and useful relation associated with a dual pair $(F,G)$ is 
	$\displaystyle{\sum_{i=1}^N\langle f_i,g_i \rangle} = tr(\Theta_G \Theta_{F}^*)= tr (\Theta_{F}^* \Theta_G)=  tr(I)=n.$ We aim to classify certain dual pair as equivalent. Given dual pair $(F,G)$ and $(F',G'),$ we define them as equivalent if there exists a unitary operator $U$ (orthogonal in the real case) such that $(F',G') = (UF,UG).$	For a detailed study on frames, we refer to \cite{ole}. \\
	Another important operator associated with a frame is the \textit{Gramian operator}. For a frame $F= \{f_i\}_{i=1}^N,$ the Gramian operator $\mathcal{G}: \mathbb{C}^N \to  \mathbb{C}^N $ is defined as $\mathcal{G} = \Theta_{F} \Theta_{F}^*.$ The matrix representation of the Gramian of a frame $F= \{f_i\}_{i=1}^N$ is called the \textit{Gramian matrix} and is defined by $\mathcal{G}= \begin{pmatrix}
		\langle f_1,f_1 \rangle & \langle f_2,f_1 \rangle  & \cdots  &\langle f_N,f_1 \rangle\\
		\langle f_1,f_2 \rangle & \langle f_2,f_2 \rangle  & \cdots  &\langle f_N,f_1 \rangle \\
		\vdots &\vdots  &\vdots &\vdots  \\
		\langle f_1,f_N \rangle & \langle f_2,f_N \rangle  & \cdots  &  \langle f_N,f_N \rangle \\
		
	\end{pmatrix}.$\\~\\

	\noindent
	The concept of treating frames as codes involves considering an original vector $f \in \mathcal{H}$  and an $(N,n)-$frame $F$ with analysis operator $\Theta_F,$ the vector $\Theta_F f$ is viewed as an encoded version of $f,$ which can then be transmitted to a receiver and subsequently decoded by applying $\Theta_{G}^*,$ where $G$ is a dual of $F.$ During data transmission process, it is possible that some frame coefficients in \eqref{eqn2point1reconstruction} may be lost, corrupted, or delayed to the extent that it becomes necessary to reconstruct $f$ using the received components. In this scenario, the received vector can be represented as $\Theta_{G}^* (I_{N\times N} - D) \Theta_{F} f,$ where $D$ is a diagonal matrix consisting of $m$ ones and $N-m$ zeros. The ones in $D$ correspond to the lost components of $\Theta_{G}^* \Theta_{F} f,$ effectively representing the coordinates of $\Theta_{G}^* \Theta_{F} f,$ that have been erased. If errors occur in \textit{`m' positions}, then the error operator is defined by
	$$E_{\Lambda}f:= \Theta_{G}^*D\Theta_{F} f=\sum_{i\in\Lambda} \langle f,f_i \rangle g_i, $$
	where $\Lambda $ is the set of indices corresponding to the erased coefficients, $D$ is an $N\times N $ diagonal matrix with  diagonal elements $d_{ii}= 1$ for $i\in \Lambda$ and 0 otherwise. The error for a dual pair $(F,G)$ for $m-$erasures is defined by 
	$$  \bigg\{ \mathcal{M} \left(\Theta_{G}^*D\Theta_{F}  \right): D \in \mathcal{D}_m \bigg\}, $$
	where where $\mathcal{M}$ is an appropriate measure of the error operator and $\mathcal{D}_m $ is the set of all diagonal matrices with $'m'$ nonzero entries(1 in $i$'th position) and  zeroes in $N-m$ entries on the main diagonal. The goal is to characterize $(F,G)$ dual pair which gives the minimum error. \\
	
	\begin{defn}
		Let $F = \{f_i\}_{i=1}^N$ be a frame for $\mathcal{H}$ and $G = \{g_i\}_{i=1}^N$ be a dual of $F.$ The $(N,n)$ dual pair $(F,G)$ is  called $1-$uniform if  $\langle f_i, g_i \rangle = c,\,\forall  1 \leq i \leq N,$ where $c$ is a constant. 
	\end{defn}
\begin{rem}\label{rem3point2}
	It may be noted that if a $(N,n)$ dual pair $(F,G)$  is $1-$uniform, then the constant $c$ turns out to be $\frac{n}{N}$, for $n= \sum\limits_{i=1}^N \langle f_i,g_i \rangle= cN.$
\end{rem}

		\begin{defn}
		
		A  $1-$uniform $(N,n) $dual pair $(F,G)$ is called a \textit{ $2-$uniform dual pair} if  $\langle f_i ,g_j \rangle \langle f_j , g_i \rangle = c,$  for $1\leq i \neq j \leq N$ and for some constant $c.$ 
	\end{defn}
\begin{thm}\label{thm2point4minq}
	Let $a$ be a positive real number, not equal to $1,$ and $m$ be a rational number, then $a^m -1 > \text{or} < m(a-1),$ according as $m$ does not or does lie between $0$ and $1$.
\end{thm}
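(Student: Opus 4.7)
The plan is to proceed by case analysis on $m$, reducing every rational case to a geometric sum identity. First, for $m$ a positive integer at least $2$, the factorization $a^m - 1 = (a-1)(1 + a + a^2 + \cdots + a^{m-1})$ does the job directly: when $a > 1$ the parenthesized factor strictly exceeds $m$ and $a-1 > 0$, yielding $a^m - 1 > m(a-1)$; when $0 < a < 1$ the factor is strictly less than $m$ and $a - 1 < 0$, so the product is again strictly greater than $m(a-1)$.

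For $m = p/q$ a positive rational in lowest terms, I would substitute $a = b^q$ (with $b > 0$, $b \neq 1$) so that $a^m = b^p$. The target inequality becomes a comparison of $q(b^p - 1)$ with $p(b^q - 1)$; factoring out $(b-1)$ from each geometric sum reduces it, after dividing by $(b-1)$ and tracking its sign, to the strict monotonicity in $k$ of the average
$$A_k(b) := \frac{1 + b + \cdots + b^{k-1}}{k},$$
which is increasing when $b > 1$ and decreasing when $0 < b < 1$. This monotonicity is elementary and sets the direction of the inequality according to whether $p < q$ (so $0 < m < 1$, giving $a^m - 1 < m(a-1)$) or $p > q$ (so $m > 1$, giving $a^m - 1 > m(a-1)$); both sub-cases $b > 1$ and $0 < b < 1$ yield the same conclusion despite opposite intermediate signs.

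For $m < 0$ rational, I would write $m = -p/q$ with positive integers $p, q$ and again substitute $a = b^q$. The target then reads $1 - b^{-p} < (p/q)(b^q - 1)$; multiplying through by $q b^p$ and factoring $(b-1)$ out of each geometric sum produces a comparison of $q(1 + b + \cdots + b^{p-1})$ with $p\, b^p (1 + b + \cdots + b^{q-1})$, the direction set by the sign of $b-1$. For $b > 1$, every term on the right is at least $b^p$ while every term on the left is at most $b^{p-1}$, so the left side is at most $pq\, b^{p-1}$ and the right is at least $pq\, b^p$; since $b^p > b^{p-1}$, the inequality holds. The case $0 < b < 1$ is symmetric with all monotonicities reversed.

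The main obstacle is the negative-exponent case: unlike the positive-rational case it does not reduce cleanly to a monotonicity-of-averages statement, and one must instead argue via explicit term-by-term bounds after clearing the negative power. Uniformly tracking the sign of $b-1$ across all sub-cases, so that both $b > 1$ and $0 < b < 1$ deliver the same final conclusion, is the other bookkeeping-heavy aspect of the argument.
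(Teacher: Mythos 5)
The paper does not actually prove this theorem: it is stated as a known classical result (the rational-exponent form of Bernoulli's inequality, familiar from classical algebra texts) and is used only as an ingredient in the proof of the lemma that follows it. So there is no proof in the paper to compare yours against; on its own merits, your argument is a correct and complete elementary proof. All three reductions check out. In the positive-rational case the monotonicity claim you rely on is indeed a one-line computation: writing $S_k = 1 + b + \cdots + b^{k-1}$, one has
$$A_{k+1}(b) - A_k(b) = \frac{k b^{k} - S_k}{k(k+1)},$$
which has the sign of $b-1$ because each of the $k$ terms of $S_k$ is dominated by (resp.\ dominates) $b^{k}$ according as $b>1$ (resp.\ $0<b<1$); combined with the sign of the factor $b-1$ you divided out, both sub-cases deliver the stated direction, exactly as you claim. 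In the negative case your term-by-term bounds $qS_p \le pq\,b^{p-1} < pq\,b^{p} \le p\,b^{p}S_q$ for $b>1$ (all reversed for $0<b<1$) are valid, including the boundary instance $p=1$ where the outer inequalities degenerate to equalities but the middle one stays strict. The only point worth flagging is cosmetic: as literally stated, the theorem would assert strict inequality at $m=0$ and $m=1$, where equality holds; like the paper, you implicitly exclude these values, which is the intended reading.
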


	\begin{lem}\label{lemmma2point4}
		Suppose $a_1,a_2,\ldots,a_s$be positive real numbers and  $p_1,p_2,\ldots,p_s$ be positive real numbers. Then for any $m \in \mathbb{Q},$
		$$ \frac{p_1 a_{1}^m + p_2 a_{2}^m+ \cdots + p_s a_{s}^m}{p_1+p_2+ \cdots + p_s}\geq \text{or} \leq \left(\frac{p_1 a_1 + p_2 a_2 + \cdots +p_s a_s}{p_1 + p_2 + \cdots + P_s} \right)^m$$
		according as $m$ does not or does lie between $0$ and $1$ and the equality hold if and only if all $a_i$ are equal.
	\end{lem}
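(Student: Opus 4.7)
My plan is to reduce this weighted power-mean inequality to the one-variable statement in Theorem~\ref{thm2point4minq} via a normalization trick. Set $P := \sum_{i=1}^{s} p_i$ and let $A := \frac{1}{P}\sum_{i=1}^{s} p_i a_i$ denote the weighted arithmetic mean. Dividing both sides of the claimed inequality by $A^m$ reduces the problem to showing
\[
\frac{1}{P}\sum_{i=1}^{s} p_i \left(\frac{a_i}{A}\right)^{\!m} \;\geq\; 1
\]
(with the reverse inequality when $m\in(0,1)$). The point of this substitution is that if we set $x_i := a_i/A$, then by construction $\sum_{i=1}^{s} p_i x_i = P$, so $\sum_{i=1}^{s} p_i(x_i - 1) = 0$, which is the condition that lets the tangent-line estimate close up.

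Next, I would invoke Theorem~\ref{thm2point4minq} pointwise: for each $i$, we have $x_i^m - 1 \geq m(x_i - 1)$ when $m \notin (0,1)$, with strict inequality whenever $x_i \neq 1$, and the reversed inequality (again strict away from $x_i = 1$) when $m\in(0,1)$. Multiplying by $p_i > 0$ and summing then gives
\[
\sum_{i=1}^{s} p_i x_i^m - P \;\geq\; m\sum_{i=1}^{s} p_i(x_i - 1) \;=\; 0,
\]
and undoing the normalization (multiplying by $A^m/P$) recovers exactly the claimed inequality. The equality clause follows at once: since the $p_i$ are strictly positive, the summed inequality is strict unless the pointwise inequality is an equality for every index, which by Theorem~\ref{thm2point4minq} happens only when every $x_i = 1$, that is, when all $a_i$ agree with the common value $A$.

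I do not anticipate a genuine obstacle; the argument is essentially the tangent-line form of Jensen's inequality for the convex (respectively concave) function $x\mapsto x^m$, with Theorem~\ref{thm2point4minq} supplying the supporting-line estimate at the base point $x=1$. The only small bookkeeping item is that Theorem~\ref{thm2point4minq} is formally stated for $a\neq 1$, so one should remark that at $a=1$ both sides of $a^m - 1 \gtreqless m(a-1)$ vanish, letting the pointwise step be applied uniformly to all indices (including those with $a_i = A$) without a case split.
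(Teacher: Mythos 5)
Your proof is correct and follows essentially the same route as the paper's: normalize by the weighted arithmetic mean, apply the pointwise tangent-line inequality of Theorem~\ref{thm2point4minq} at each $a_i/A$, multiply by the positive weights, and sum. If anything, your treatment is slightly more careful than the paper's, since you use a non-strict pointwise inequality with strictness noted only for $x_i \neq 1$, whereas the paper asserts strict inequalities for every index even though some $a_i$ may equal the mean $k$.
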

\begin{proof}
	First consider the case when all $a_i$ are not equal. Let us take $q_i = \frac{p_i}{p_1 + p_2 + \cdots + P_s},\,1 \leq i \leq s.$ Then $\sum\limits_{i=1}^s q_i = 1.$ Let $\sum\limits_{i=1}^s q_i a_i = k.$ Then $\frac{a_1}{k}, \frac{a_2}{k},\cdots, \frac{a_s}{k}$ are all positive and  not all of them are equal to $1.$ Thus by Theorem\ref{thm2point4minq},  $\left(\frac{a_i}{k}\right)^m -1 > \text{or} < m \left(\frac{a_i}{k} - 1\right),\;1\leq i \leq s,$ according as $m$ does not or does lie between $0$ and $1.$ Therefore, $q_i\left(\frac{a_i}{k}\right)^m - q_i > \text{or} < mq_i \left(\frac{a_i}{k} - 1\right),$ since $q_i > 0.$ Considering all such $s$ relations for $1 \leq i \leq s,$ adding we have
	$$\sum\limits_{i=1}^s q_i\left(\frac{a_i}{k}\right)^m - \sum\limits_{i=1}^s q_i > \text{or} < m\left[ \dfrac{\sum\limits_{i=1}^s q_i a_{i}^m}{k} - \sum\limits_{i=1}^s q_i \right],$$
	according as $m$ does not or does lie between $0$ and $1.$ This gives, 
	\begin{align*}
		&\;\;\;\;\;\;\;\dfrac{\sum\limits_{i=1}^s q_i a_{i}^m}{k^m} -1 > \text{or} < 0 \\& \text{or},\; \sum\limits_{i=1}^s q_i a_{i}^m >\text{or} < k^m = \left(  \sum\limits_{i=1}^s q_i a_i\right)^m \\& \text{or},\;\frac{p_1 a_{1}^m + p_2 a_{2}^m+ \cdots + p_s a_{s}^m}{p_1+p_2+ \cdots + p_s} > \text{or} < \left(\frac{p_1 a_1 + p_2 a_2 + \cdots +p_s a_s}{p_1 + p_2 + \cdots + P_s} \right)^m,
	\end{align*}
according as $m$ does not or does lie between $0$ and $1.$\\
It is straightforward to see that when all $a_i$ are equal, then $\dfrac{p_1 a_{1}^m + p_2 a_{2}^m+ \cdots + p_s a_{s}^m}{p_1+p_2+ \cdots + p_s} = \left(\dfrac{p_1 a_1 + p_2 a_2 + \cdots +p_s a_s}{p_1 + p_2 + \cdots + P_s} \right)^m.$ Conversely, if $\dfrac{p_1 a_{1}^m + p_2 a_{2}^m+ \cdots + p_s a_{s}^m}{p_1+p_2+ \cdots + p_s} = \left(\dfrac{p_1 a_1 + p_2 a_2 + \cdots +p_s a_s}{p_1 + p_2 + \cdots + P_s} \right)^m,$ then it is evident that all $a_i$ are equal. If this were not the case, then using the same argument as above, we can prove that the equality does not hold.
\end{proof}
	
	\section{Spectrally optimal dual pairs}
	In this section, we assume the measure $\mathcal{M}$ of the error operator to be its spectral radius $\rho.$ Here, we look for that dual pair which minimize the error by taking spectral radius as the error measurement. Let us define for $p >1,$
 \begin{align*}
		&\mathcal{E}_{1}^p (F,G) := \left\{\frac{1}{N} \sum\limits_{D \in \mathcal{D}^{(1)}}\left( \rho( \Theta_{G}^*D\Theta_{F})\right)^p\right\}^{\frac{1}{p}} \;\\
		& \delta_{1}^p := \inf\left\{ \mathcal{E}_{1}^p (F,G) : (F,G)\, \text{is a $(N,n)$ dual pair }   \right\}\\
     & \mathcal{E}_{1}^p := \left\{(F',G'): \mathcal{E}_{1}^p (F',G') =  \delta_{1}^p  \right\}
	\end{align*}
Every element in $ \mathcal{E}_{1}^p$ is called a $1-$ersaure spectrally optimal dual pair. For  $1<k \leq N,$ let us define:

	 \begin{align*}
		&\mathcal{E}_{k}^p (F,G) := \left\{\frac{1}{\binom{N}{k}} \sum\limits_{D \in \mathcal{D}^{(1)}}\left( \rho( \Theta_{G}^*D\Theta_{F})\right)^p\right\}^{\frac{1}{p}} \,\;\\
		& \delta_{k}^p := \inf\left\{ \mathcal{E}_{k}^p (F,G) : (F,G)\in \mathcal{E}_{k-1}^p    \right\}\\
     & \mathcal{E}_{k}^p := \left\{(F',G'): (F',G') \in \mathcal{E}_{k-1}^p\, \text{and}\,  \mathcal{E}_{k}^p (F',G') =  \delta_{k}^p  \right\}
	\end{align*}

 Every element in $ \mathcal{E}_{k}^p$ is called a $k-$ersaure spectrally optimal dual pair.

\noindent
For a dual pair $(F,G),$ if the error occurs in the $i^{th}$ position, then $\rho\left(\Theta_{G}^*D\Theta_{F}\right) = \left|\langle f_i, g_i \rangle \right| $ and so,
	\begin{equation}\label{equation2}
		\mathcal{E}_{1}^p (F,G) = \left\{\frac{1}{N} \sum\limits_{i=1}^N \left|\langle f_i, g_i \rangle \right|^p\right\}^{\frac{1}{p}}
	\end{equation}\\
	 We shall now give a characterization for an $(N,n)$ dual pair to be a $1-$erasure spectrally optimal. 
	
	\begin{prop}\label{prop3point1}
		The  value of $ \delta_{1}^p$ is $\frac{n}{N}$. Moreover, an $(N,n)$ dual pair $(F,G) $ is a $1-$erasure spectrally optimal  if and only if it is $1-$uniform.
	\end{prop}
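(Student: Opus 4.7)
The plan is to combine formula (2) for $\mathcal{E}_1^p(F,G)$ with two elementary ingredients: the trace identity $\sum_{i=1}^N \langle f_i,g_i\rangle = n$ (established in the preliminaries for any dual pair), and the power-mean inequality of Lemma \ref{lemmma2point4} applied with equal weights $p_i=1$ and exponent $m=p>1$. The lower bound then falls out of a two-step chain, and both bounds being tight will pin down exactly the $1$-uniform pairs.

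Concretely, I first apply Lemma \ref{lemmma2point4} with $a_i = |\langle f_i,g_i\rangle|$, $p_i=1$, $s=N$, and $m=p>1$ (which does not lie between $0$ and $1$), to obtain
\begin{equation*}
\frac{1}{N}\sum_{i=1}^N |\langle f_i,g_i\rangle|^p \; \geq \; \left(\frac{1}{N}\sum_{i=1}^N |\langle f_i,g_i\rangle|\right)^p.
\end{equation*}
Then I chain this with the triangle inequality and the trace identity:
\begin{equation*}
\sum_{i=1}^N |\langle f_i,g_i\rangle| \; \geq \; \Bigl|\sum_{i=1}^N \langle f_i,g_i\rangle\Bigr| \; = \; n.
\end{equation*}
Taking $p$-th roots and substituting into (2) yields $\mathcal{E}_1^p(F,G) \geq n/N$, hence $\delta_1^p \geq n/N$. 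For the reverse inequality, I exhibit $1$-uniform pairs (for instance an equal-norm Parseval frame and its canonical dual, for which $\langle f_i, f_i\rangle = n/N$) and verify by direct substitution in (2), using Remark \ref{rem3point2}, that $\mathcal{E}_1^p(F,G) = n/N$.

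For the characterization, I analyze when equality holds simultaneously in both inequalities above. By the equality clause of Lemma \ref{lemmma2point4}, equality in the first step forces all $|\langle f_i, g_i\rangle|$ to be equal. Equality in the triangle inequality, combined with the fact that the sum equals the positive real number $n$, forces every $\langle f_i,g_i\rangle$ to be a non-negative real. Together these conditions yield $\langle f_i,g_i\rangle = n/N$ for every $i$, which is precisely the $1$-uniform condition. The converse follows from the direct computation in the previous paragraph.

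The main obstacle, and the only subtle point, is the triangle-inequality step: the $\langle f_i,g_i\rangle$ are a priori complex, so merely having equal moduli does not force the sum to reach its maximum modulus. One must invoke both equality clauses carefully and use the specific value $n>0$ of the sum to conclude that all inner products are equal real scalars, rather than merely equal in magnitude. The rest of the argument is a straightforward bookkeeping of the two inequalities and a check that $1$-uniform pairs exist.
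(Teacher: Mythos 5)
Your proposal is correct and follows essentially the same route as the paper: the power-mean inequality of Lemma \ref{lemmma2point4} with equal weights, the trace identity $\sum_i \langle f_i,g_i\rangle = n$, existence of an equal-norm Parseval frame for the upper bound, and the two equality clauses (equal moduli from the lemma, non-negativity of each $\langle f_i,g_i\rangle$ from the sum being the real number $n$) for the characterization. The paper carries out your ``triangle-inequality equality'' step by writing $\langle f_j,g_j\rangle = a_j+ib_j$ and comparing $\sum a_j = n = \sum\sqrt{a_j^2+b_j^2}$, which is the same argument in explicit coordinates.
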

	\begin{proof}
For a $(N,n)$ dual pair $(F,G),$ it is easy to see that 
\begin{align}\label{eqn3point3}
    \mathcal{E}_{1}^p (F,G) = \left\{\frac{1}{N} \sum\limits_{i=1}^N \left|\langle f_i, g_i \rangle \right|^p\right\}^{\frac{1}{p}} \geq \left( \left(\frac{\sum\limits_{i=1}^N \left|\langle f_i, g_i \rangle \right|}{N}   \right)^p \right)^{\frac{1}{p}} = \frac{\sum \limits_{i=1}^N\left|\langle f_i, g_i \rangle \right|}{N} \geq \frac{n}{N},
\end{align}
the second inequality follows from the application of Lemma\ref{lemmma2point4}.
 Suppose there exists a uniform Parseval frame $F'$. Then, we will have  $\mathcal{E}_{1}^p (F',F') = \frac{n}{N} $ and hence, $ \mathcal{E}_{1}^p  = \frac{n}{N}.$ The existence of such a Parseval frame $F'$  is guaranteed by Theorem 2.1 in \cite{casa2}. In particular, we take $S$ to be the identity operator, which has only $1$ as its eigenvalues, and $a_i = \sqrt{\frac{n}{N}}, \; \forall i.$ As $\sum\limits_{i=1}^k a_{i}^2 = \frac{kn}{N},$ for $ 1\leq k \leq n$ and  $\sum\limits_{i=1}^N a_{i}^2 = n,$ there exists a Parseval frame $\{f'_i\}_{i=1}^N$ such that $\| f'_i\|^2 = \frac{n}{N},$ for all $i.$ Therefore, $\mathcal{E}_{1}^p (F',F') = \frac{n}{N}$ and hence, 
$\delta_{1}^p = \frac{n}{N}.$ 
\par If $(F,G)$ is a $1-$uniform dual pair, then $\langle f_i, g_i \rangle = \frac{n}{N},\;\forall i. $ Thus, by \eqref{equation2}, we have $\mathcal{E}_{1}^p (F,G) = \frac{n}{N}$ and hence $(F,G) $  is a $1-$erasure spectrally optimal dual pair. Conversely, suppose  $(F,G) $  is a $1-$erasure spectrally optimal dual pair. We then have $\mathcal{E}_{1}^p (F,G) = \frac{n}{N} .$ Thus by using the expression of $\mathcal{E}_{1}^p (F,G)$ as in \eqref{equation2}, $\left\{\frac{1}{N} \sum\limits_{i=1}^N \left|\langle f_i, g_i \rangle \right|^p\right\}^{\frac{1}{p}} = \left( \left(\frac{\sum\limits_{i=1}^N \left|\langle f_i, g_i \rangle \right|}{N}   \right)^P \right)^{\frac{1}{p}} = \frac{\sum\limits_{i=1}^N \left|\langle f_i, g_i \rangle \right|}{N} = \frac{n}{N}.$ By Lemma \ref{lemmma2point4}, this is possible if and only if $\left|\langle f_i, g_i \rangle \right| =  \frac{n}{N},\,\forall i.$
 Now, $ n = \displaystyle{\sum_{j=1}^N \langle f_j , g_j \rangle \leq \sum_{j=1}^N |\langle f_j , g_j \rangle |  = n } .$  It now suffices to show that $\langle f_j, g_j \rangle \geq 0,\,\forall\, j.$ Clearly,
			$$\sum\limits_{j=1}^N \langle f_j , g_j \rangle = n =  \sum\limits_{j=1}^N |\langle f_j , g_j \rangle | .$$
			Let $\langle f_j , g_j \rangle = a_j + ib_j,\, 1\leq j \leq N,$ where $a_j,b_j \in \mathbb{R}.$ Then, $\sum\limits_{j=1}^N a_j = n = \sum\limits_{j=1}^N \sqrt{a_j^2 +b_j^2,}$ which shows that $b_j = 0$  and also $a_j \geq 0,\, \forall\, j.$
\end{proof}

\begin{cor}
		For a frame $F$ in $\mathcal{H},$\, $S_{F}^{-1}F$  is a uniform Parseval frame if and only if \;$\left(F, S_{F}^{-\frac{1}{2}}F\right) \in \mathcal{E}_{1}^p.$
	\end{cor}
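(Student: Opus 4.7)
The plan is to derive this as a direct corollary of Proposition~\ref{prop3point1} by unpacking both sides of the equivalence in terms of $F$ itself.

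First, I would examine the left-hand side. The frame operator of the canonical dual $S_F^{-1}F$ is $S_F^{-1}S_F S_F^{-1}=S_F^{-1}$, so $S_F^{-1}F$ is Parseval precisely when $S_F=I$; in that case $S_F^{-1}F$ coincides with $F$. Its uniformity then reads $\|f_i\|^2=c$ for every $i$, and the identity $\sum_i\|f_i\|^2=\operatorname{tr}(S_F)=n$ pins $c=n/N$. Thus the left-hand condition is equivalent to saying that $F$ is itself a Parseval frame with $\|f_i\|^2=n/N$ for all $i$.

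Next, I would examine the right-hand side. A direct computation gives $\Theta_{S_F^{-1/2}F}^{\,*}\Theta_F = S_F^{-1/2}\Theta_F^{\,*}\Theta_F = S_F^{1/2}$, so $(F,S_F^{-1/2}F)$ is a dual pair if and only if $S_F=I$, which again forces $F$ to be Parseval and collapses the pair to $(F,F)$. Proposition~\ref{prop3point1} then says this pair lies in $\mathcal{E}_1^p$ iff $\langle f_i,f_i\rangle = \|f_i\|^2 = n/N$ for every $i$, via Remark~\ref{rem3point2} for the value of the constant.

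Since both conditions reduce to the same assertion about $F$, the equivalence follows. The main observation, and the only mildly non-trivial step, is recognising that $S_F^{-1}F$ being Parseval already forces $S_F=I$ and that the same occurs on the right via the dual-pair identity; after that, the corollary becomes a transparent consequence of Proposition~\ref{prop3point1}.
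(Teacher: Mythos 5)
Your argument is internally sound for the statement exactly as printed, but the fact that both sides collapse to ``$S_F=I$ and $F$ uniform'' should have been a warning sign: a corollary whose two sides each secretly assert ``$F$ is a uniform Parseval frame'' has no content. The paper's own proof reveals that the printed statement transposes the two exponents; what is actually proved is that $S_F^{-\frac{1}{2}}F$ is a uniform Parseval frame if and only if $\left(F,S_F^{-1}F\right)\in\mathcal{E}_1^p$. In that version neither side forces $S_F=I$: the frame $S_F^{-\frac{1}{2}}F$ is \emph{always} Parseval (its frame operator is $S_F^{-\frac{1}{2}}S_FS_F^{-\frac{1}{2}}=I$), so the left-hand condition is genuinely just uniformity of $S_F^{-\frac{1}{2}}F$; and $(F,S_F^{-1}F)$ is always a dual pair, so the right-hand condition is genuinely a statement about optimality of the canonical dual pair.

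The paper's proof is then a one-line application of Proposition~\ref{prop3point1} through the identity $\bigl\|S_F^{-\frac{1}{2}}f_i\bigr\|^2=\langle f_i,S_F^{-1}f_i\rangle$ together with $\sum_{i=1}^N\langle f_i,S_F^{-1}f_i\rangle=n$: uniformity of $S_F^{-\frac{1}{2}}F$ forces $\bigl\|S_F^{-\frac{1}{2}}f_i\bigr\|^2=\frac{n}{N}$ for all $i$, which is exactly $1$-uniformity of $(F,S_F^{-1}F)$, which by Proposition~\ref{prop3point1} is exactly membership in $\mathcal{E}_1^p$; and conversely. So while your individual computations are correct (the frame operator of $S_F^{-1}F$ is indeed $S_F^{-1}$, and $\Theta_{S_F^{-1/2}F}^{*}\Theta_F=S_F^{1/2}$ does force $S_F=I$ for duality), they end up proving a degenerate variant rather than the intended corollary. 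The intended one is both stronger --- it applies to arbitrary frames $F$, with no Parseval condition emerging --- and shorter to prove, since it bypasses any discussion of when the pair is dual.
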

\begin{proof}
    It is easy to see that if  $S_{F}^{-\frac{1}{2}}F$  is a uniform Parseval frame, then $\left\| S_{F}^{-\frac{1}{2}}f_i \right\|^2 = \frac{n}{N}, \;\forall i,$ Since, $\mathlarger{\sum\limits_{i=1}^N }\left\|  S_{F}^{-\frac{1}{2}}f_i  \right\|^2 = \sum\limits_{i=1}^N \langle f_i, S_{F}^{-1}f_i \rangle = n. $ Therefore by \eqref{equation2}, $\mathcal{E}_{1}^p (F,S_{F}^{-1}F) = \left\{\frac{1}{N} \sum\limits_{i=1}^N \left\|  S_{F}^{-\frac{1}{2}}f_i  \right\|^{2p}\right\}^{\frac{1}{p}} = \frac{n}{N}. $ Hence, by Proposition \ref{prop3point1}, $(F, S_{F}^{-1}F) \in \mathcal{E}_{1}^p .$ Conversely, if  $(F, S_{F}^{-1}F) \in \mathcal{E}_{1}^p,$ then by Proposition \ref{prop3point1},  $(F, S_{F}^{-1}F)$ is a $1-$uniform dual pair and hence, $\left\|S_{F}^{-\frac{1}{2}}f_i\right\|^2 = \langle f_i, S_{F}^{-1}f_i \rangle = \frac{n}{N}, \;\forall i.$ 
\end{proof}

	\noindent From the proof of Proposition \ref{prop3point1}, we can also conclude the following:
	
	\begin{thm}\label{thm3point6}
	There exists a Parseval frame $F$ in $\mathcal{H}$ such that $(F,F) \in \mathcal{E}_{1}^p . $ 
	\end{thm}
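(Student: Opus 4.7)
The plan is to exploit the construction already embedded in the proof of Proposition \ref{prop3point1}. There, a uniform Parseval frame $F'=\{f'_i\}_{i=1}^N$ with $\|f'_i\|^2 = \tfrac{n}{N}$ was produced via Theorem 2.1 of \cite{casa2} (taking $S=I$ and $a_i=\sqrt{n/N}$). I would begin by re-invoking that existence result to obtain such a $F'$ in $\mathcal{H}$.

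Next, I would observe that since $F'$ is Parseval, its frame operator satisfies $S_{F'}=I$, so the canonical dual of $F'$ is $F'$ itself; in particular $(F',F')$ is a bona fide $(N,n)$ dual pair. For this pair,
\[
\langle f'_i, f'_i\rangle = \|f'_i\|^2 = \frac{n}{N}, \quad 1\le i\le N,
\]
so $(F',F')$ is $1$-uniform in the sense of the definition preceding Remark \ref{rem3point2} (with the constant $c=n/N$ forced by Remark \ref{rem3point2}).

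Finally, Proposition \ref{prop3point1} states that an $(N,n)$ dual pair is $1$-erasure spectrally optimal if and only if it is $1$-uniform. Applying this characterization to the pair $(F',F')$ yields $(F',F')\in \mathcal{E}_1^p$, which is precisely the conclusion sought. Thus the theorem follows essentially as a by-product of the construction used to evaluate $\delta_1^p$ in Proposition \ref{prop3point1}.

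There is no real obstacle here beyond correctly invoking the existence of a uniform Parseval frame with prescribed squared norm $n/N$; once that cited construction is in hand the remaining steps reduce to computing $\langle f'_i,f'_i\rangle$ and applying the already-established characterization of optimality.
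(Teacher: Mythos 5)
Your argument is correct and is essentially the paper's own: the paper derives Theorem \ref{thm3point6} directly from the proof of Proposition \ref{prop3point1}, where the uniform Parseval frame $F'$ with $\|f'_i\|^2=\tfrac{n}{N}$ is constructed via Theorem 2.1 of \cite{casa2} and shown to satisfy $\mathcal{E}_1^p(F',F')=\tfrac{n}{N}=\delta_1^p$. Your only cosmetic difference is concluding via the $1$-uniformity characterization rather than by evaluating $\mathcal{E}_1^p(F',F')$ directly, which amounts to the same computation.
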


It is established that $G$ is a dual of a frame $F$ if and only if $UG$ is a dual of the frame $UF,$ where $U$ is a unitary operator. The following theorem demonstrates that the collection of all $1-$erasure spectrally optimal dual pairs remains invariant under the action of a unitary operator.
\begin{thm}\label{thm3point7}
	For a unitary operator $U$ on $\mathcal{H}$, a dual pair $(F,G)$ is a $1-$erasure spectrally optimal dual pair if and only if $(UF,UG)$  is a $1-$erasure spectrally optimal dual pair. 
\end{thm}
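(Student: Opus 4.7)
The plan is to reduce the claim to Proposition \ref{prop3point1}, which characterizes $1$-erasure spectrally optimal dual pairs as exactly the $1$-uniform dual pairs. So the proof amounts to checking that the property of being a $1$-uniform dual pair is preserved under the action of a unitary operator.

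First I would verify that $(UF, UG)$ is indeed a dual pair whenever $(F,G)$ is. For any $f \in \mathcal{H}$, writing $f = U U^* f$ and expanding $U^* f$ in the dual expansion \eqref{eqn2point1reconstruction} gives
\[
f = U \sum_{i=1}^N \langle U^* f, f_i \rangle g_i = \sum_{i=1}^N \langle f, U f_i \rangle \, U g_i,
\]
so $UG$ is a dual of $UF$. Equivalently, one can note $\Theta_{UF} = \Theta_F U^*$ and $\Theta_{UG} = \Theta_G U^*$, hence $\Theta_{UG}^* \Theta_{UF} = U \Theta_G^* \Theta_F U^* = U U^* = I$.

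Next, I would use the defining property of a unitary operator, $U^* U = I$, to compute
\[
\langle U f_i, U g_i \rangle = \langle f_i, U^* U g_i \rangle = \langle f_i, g_i \rangle
\]
for every $i$. Consequently, $(F,G)$ is $1$-uniform (i.e.\ $\langle f_i, g_i \rangle = n/N$ for all $i$, as in Remark \ref{rem3point2}) if and only if $(UF, UG)$ is $1$-uniform. Invoking Proposition \ref{prop3point1} in both directions then yields the theorem.

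There is essentially no obstacle here; the statement is a direct consequence of the unitary invariance of inner products combined with the characterization already proved. The only mild care needed is to confirm that $(UF, UG)$ qualifies as a dual pair in the first place, which is handled by the synthesis/analysis operator identities above.
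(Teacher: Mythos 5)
Your proof is correct and rests on the same key observation as the paper's, namely that a unitary preserves the inner products $\langle f_i, g_i\rangle$; the paper simply applies this directly to show $\mathcal{E}_{1}^p(F,G)=\mathcal{E}_{1}^p(UF,UG)$, whereas you route the same fact through the $1$-uniformity characterization of Proposition \ref{prop3point1}. Your extra verification that $(UF,UG)$ is a dual pair is a welcome bit of care that the paper leaves implicit.
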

\noindent	The proof of the above theorem is straightforward, as
 $$\mathcal{E}_{1}^p (F,G) = \left\{\frac{1}{N} \sum\limits_{i=1}^N \left|\langle f_i, g_i \rangle \right|^p\right\}^{\frac{1}{p}} = \left\{\frac{1}{N} \sum\limits_{i=1}^N \left|\langle Uf_i, Ug_i \rangle \right|^p\right\}^{\frac{1}{p}} = \mathcal{E}_{1}^p (UF,UG) .$$

\vspace{1.5mm}
\par Suppose the frames under consideration have exactly $n$ elements. Then, such  frames have only one dual, namely the canonical dual. So, the analysis in this case boils down to the collection of all $(n,n)$ dual pairs of the form $(F,S_{F}^{-1}F).$ For any $(n,n)$ dual pair $(F,S_{F}^{-1}F),$ we have  
$$\mathcal{E}_{1}^p (F,S_{F}^{-1}F) = \left\{\frac{1}{n} \sum\limits_{i=1}^n \left|\langle f_i, S_{F}^{-1}f_i \rangle \right|^p\right\}^{\frac{1}{p}} = \left\{\frac{1}{n} \sum\limits_{i=1}^n \left\| S_{F}^{-\frac{1}{2}}f_i  \right\|^{2p}\right\}^{\frac{1}{p}}=1,$$
as  $S_{F}^{-\frac{1}{2}}F$ is both a Parseval frame as well as a basis and it is well-known that  a basis in a Hilbert space is a Parseval frame if and only if it is orthonormal. Therefore, $\mathcal{E}_{1}^p  = 1.$ Thus, every $(n,n)$ dual pair of the form $(F,S_{F}^{-1}F),$  is a $1-$erasure spectrally optimal dual pair. \\~\\
	We shall also provide a very useful expression for $\mathcal{E}_{2}^p (F,G).$ Consider
\begin{eqnarray*}
\mathcal{E}_{2}^p (F,G) = \left\{\frac{1}{\binom{N}{2}} \sum\limits_{D \in \mathcal{D}^{(2)}}\left( \rho( \Theta_{G}^*D\Theta_{F})\right)^p\right\}^{\frac{1}{p}} = \left\{\frac{1}{\binom{N}{2}} \sum\limits_{D \in \mathcal{D}^{(2)}}\left( \rho( D\Theta_{G}^*\Theta_{F})\right)^p\right\}^{\frac{1}{p}}.
\end{eqnarray*}
Taking  $D = diag [0,0,\ldots,0,1,0,\ldots,0,1,0,\ldots,0],$ the matrix representation $A$ of $D\Theta_{F}\Theta_{G}^*$ with respect to the standard orthonormal basis of $\mathbb{C}^N$ is

	$$	\begin{pmatrix}
		0 & 0  & \cdots &0 & \cdots &0 & \cdots &0\\
		\vdots &\vdots  &\vdots &\vdots &\vdots&\vdots &\vdots &\vdots \\
		0 & 0  & \cdots &0 & \cdots &0 & \cdots &0 \\
		\alpha_{1i} & \alpha_{2i} & \cdots &\alpha_{ii} & \cdots &\alpha_{ji} & \cdots &\alpha_{Ni} \\
		0 & 0  & \cdots &0 & \cdots &0 & \cdots &0\\
		\vdots &\vdots  &\vdots &\vdots &\vdots&\vdots &\vdots &\vdots \\
		0 & 0  & \cdots &0 & \cdots &0 & \cdots &0 \\
		\alpha_{1j} & \alpha_{2j} & \cdots &\alpha_{ij} & \cdots &\alpha_{jj} & \cdots &\alpha_{Nj} \\
		0 & 0  & \cdots &0 & \cdots &0 & \cdots &0\\
		\vdots &\vdots  &\vdots &\vdots &\vdots&\vdots &\vdots &\vdots \\
		0 & 0  & \cdots &0 & \cdots &0 & \cdots &0 \\
	\end{pmatrix},$$
	where $\alpha_{ij}:= \langle g_i , f_j \rangle, \forall\,i,j. $
	\noindent
	The characteristic polynomial of the above matrix is
	\begin{align*}
		\det(A-xI) &= (-x)^{i-1}(-x)^{N-j} \left|\begin{array}{ccccccc}
			-x +  \alpha_{ii} &\alpha_{(i+1) i} &\cdots &\alpha_{(j-1)i} &\alpha_{ji}\\
			0 & -x  &\cdots& 0 &0\\
			\vdots &\vdots  &\vdots &\vdots &\vdots \\
			0 & 0  &\cdots& -x &0\\
			\alpha_{ij} & \alpha_{(i+1)j} & \cdots &\alpha_{(j-1)j} & -x + \alpha_{jj} \\
		\end{array}\right| \\ &= (-x)^{N-2} \left((x- \alpha_{ii})(x-\alpha_{jj}) -  \alpha_{ij} \alpha_{ji} \right).
	\end{align*}
	\noindent
	Therefore, the eigenvalues of $A$ are $0,\, \dfrac{ \alpha_{ii} + \alpha_{jj} \pm \sqrt{( \alpha_{ii}- \alpha_{jj})^2 + 4 \alpha_{ij} \alpha_{ji} }}{2}. $  Hence, $$ \rho( \Theta_{G}^*D\Theta_{F}) = \left| \frac{\alpha_{ii} + \alpha_{jj} \pm \sqrt{(\alpha_{ii} - \alpha_{jj})^2 + 4  \alpha_{ij}\alpha_{ji} }}{2} \right| = \left| \frac{\alpha_{ii} + \alpha_{jj} + \sqrt{(\alpha_{ii} - \alpha_{jj})^2 + 4 \alpha_{ij}\alpha_{ji} }}{2} \right|.$$ Thus, for a dual pair $(F,G) \in 	\mathcal{E}_{1}^p,$ we have
	\begin{equation} \label{eqn3point3}
		\mathcal{E}_{2}^p (F,G) = \left\{\frac{1}{\binom{N}{2}} \sum\limits_{i \neq j}\left|  \frac{n}{N} +  \sqrt{\alpha_{ij} \alpha_{ji}} \right|^p\right\}^{\frac{1}{p}} .
	\end{equation}

\begin{lem}\label{lem3point5}
	Let $E = \left\{(\alpha_1,\alpha_2, \ldots, \alpha_r) \in \mathbb{R}^r : \sum\limits_{i=1}^r \alpha_i = c\right\},$ where $c\geq 0.$ Then for any 
	$a>0$ and $p>1,$ 
	$$ \inf\limits_{(\alpha_1,\alpha_2, \ldots, \alpha_r) \in E} \sum\limits_{i=1}^r \left| a+ \sqrt{\alpha_i}  \right|^p = r\left(a+ \sqrt{\frac{c}{r}}  \right)^p.$$
\end{lem}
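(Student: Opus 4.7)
The plan is to reduce to the nonnegative orthant, make the change of variable $\sigma_i=\sqrt{\alpha_i}$, and then apply Lemma \ref{lemmma2point4} twice in tandem with a Lagrange-multiplier step to pin down the minimizer.

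First I would argue that the infimum is attained on configurations with every $\alpha_i\ge 0$. If some $\alpha_i<0$, then interpreting the square root via its principal branch gives $|a+\sqrt{\alpha_i}|^p=(a^2+|\alpha_i|)^{p/2}$, which strictly exceeds $a^p$; a small transfer of mass from such a negative coordinate to any positive one strictly decreases the sum while preserving the constraint. Restricting to $\sigma_i:=\sqrt{\alpha_i}\ge 0$, the problem becomes: minimize $\Phi(\sigma)=\sum_{i=1}^r(a+\sigma_i)^p$ subject to $\sum_i\sigma_i^2=c$.

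Next I would invoke Lemma \ref{lemmma2point4} on the positive numbers $a+\sigma_i$ with $m=p>1$ and unit weights to obtain
\[
\Phi(\sigma)\;\ge\;r\Bigl(a+\tfrac{1}{r}\textstyle\sum_i\sigma_i\Bigr)^p,
\]
with equality iff all $\sigma_i$ coincide. A second application with $m=\tfrac{1}{2}\in(0,1)$ and $a_i=\alpha_i$ gives $\tfrac{1}{r}\sum_i\sigma_i\le\sqrt{c/r}$, again tight only at the symmetric point. Plugging the symmetric profile $\sigma_i=\sqrt{c/r}$ into $\Phi$ directly produces $r(a+\sqrt{c/r})^p$, confirming this value is at least attained.

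The main obstacle, and the nontrivial part of the proof, is that these two power-mean inequalities sharpen in \emph{opposite} directions, so they cannot simply be chained to deduce $\Phi(\sigma)\ge r(a+\sqrt{c/r})^p$. To close the gap I would carry out a Lagrange-multiplier analysis on the sphere $\sum_i\sigma_i^2=c$: the first-order condition $p(a+\sigma_i)^{p-1}=2\lambda\sigma_i$ together with monotonicity properties of $\sigma\mapsto(a+\sigma)^{p-1}/\sigma$ (obtained by differentiation) isolate the symmetric profile as the unique interior critical point, and a second-order/bordered-Hessian check identifies it as a minimizer. The remaining boundary configurations, in which some coordinates vanish, would be handled by a separate direct comparison: after normalizing $a=1$ this reduces to a one-variable inequality amenable to yet another invocation of Lemma \ref{lemmma2point4}, completing the identification of the infimum.
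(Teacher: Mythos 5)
You have put your finger on exactly the right spot, and in fact you have gone further than the paper's own proof, which consists of the single application of Lemma~\ref{lemmma2point4} giving $\sum_{i=1}^r|a+\sqrt{\alpha_i}|^p\ge r\bigl(\tfrac1r\sum_{i=1}^r|a+\sqrt{\alpha_i}|\bigr)^p$, followed by the assertion that the right-hand side is bounded below by $r(a+\sqrt{c/r})^p$. As you observe, on the nonnegative orthant the square root is concave, so $\tfrac1r\sum_i\sqrt{\alpha_i}\le\sqrt{c/r}$ and the two estimates point in opposite directions; they cannot be chained. The paper's proof is incomplete in precisely the place you call ``the main obstacle.''

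However, your proposed repair cannot close the gap, because the gap is not closable: the stated identity is false, and your own program would discover this at the second-order stage. The symmetric profile is indeed a critical point of $\Phi$ on the sphere, but it is a maximizer, not a minimizer, of the restriction to the nonnegative part of $E$. This is transparent for $p=2$, where $\sum_i(a+\sqrt{\alpha_i})^2=ra^2+c+2a\sum_i\sqrt{\alpha_i}$ on $\{\alpha_i\ge0,\ \sum_i\alpha_i=c\}$, and $\sum_i\sqrt{\alpha_i}$ attains its maximum $\sqrt{rc}$ at $\alpha_i=c/r$ and its minimum $\sqrt{c}$ at a vertex. Concretely, for $r=2$, $c=2$, $a=1$, $p=2$ the claimed infimum is $2(1+1)^2=8$, while $(\alpha_1,\alpha_2)=(2,0)$ gives $(1+\sqrt{2})^2+1=4+2\sqrt{2}<8$. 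More generally $\alpha\mapsto(a+\sqrt{\alpha})^p$ has second derivative with the sign of $(p-2)\sqrt{\alpha}-a$, hence is concave near $\alpha=0$ for every $p>1$, so boundary configurations beat the symmetric one whenever $c/r<\bigl(a/(p-2)\bigr)^2$ (and always when $1<p\le2$); your uniqueness claim for the interior critical point also fails for $p>2$, where $(a+\sigma)^{p-1}/\sigma$ is not monotone. The honest conclusion of your analysis is that Lemma~\ref{lem3point5} computes the maximum, not the infimum, over the nonnegative portion of $E$, and the lower bound on $\delta_2^p$ derived from it in the following theorem does not follow as written (though the achievability half, via $2$-uniform pairs, is unaffected).
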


\begin{proof}
	For any ${(\alpha_1,\alpha_2, \ldots, \alpha_r) \in E}, $ it can be easily seen,  using Lemma\ref{lemmma2point4}, that 
	\begin{eqnarray}\label{eqn3point5}
		\sum\limits_{i=1}^r \left| a+ \sqrt{\alpha_i}  \right|^p \geq r \left( \frac{\sum\limits_{i=1}^r \left| a+ \sqrt{\alpha_i}  \right|}{r}  \right)^p
	\end{eqnarray}
and the equality hold if and only if each $\left| a+ \sqrt{\alpha_i}  \right|$ is equal. In this case, $\alpha_i = \frac{c}{r},\,\forall i.$ Therefore, from \eqref{eqn3point5}, we have
\begin{eqnarray*}
	\sum\limits_{i=1}^r \left| a+ \sqrt{\alpha_i}  \right|^p \geq r\left(a+ \sqrt{\frac{c}{r}}  \right)^p. 
\end{eqnarray*}
Hence, the result follows.
\end{proof}

\begin{rem}
	Note that in Lemma \ref{lem3point5}, if for $(\alpha_1,\alpha_2, \ldots, \alpha_r) \in E,$ infimum value attain, then $\alpha_i \geq 0,$ for all $i.$ Suppose not. Then first consider the case when only one term is negative, say $\alpha_j <0$ and $\alpha_i \geq 0,\;\forall i \neq j.$ Then,
	\begin{eqnarray*}
			\sum\limits_{i=1}^r \left| a+ \sqrt{\alpha_i}  \right|^p = 	\sum\limits_{i \neq j}  \left( a+ \sqrt{\alpha_i}  \right)^p + \left(a^2 -\alpha_j \right)^{\frac{p}{2}} > 	\sum\limits_{i \neq j}  \left( a+ \sqrt{\alpha_i}  \right)^p + a^p.
	\end{eqnarray*}
Note that, $\sum\limits_{i \neq j} \alpha_i = c+ |\alpha_j| > c.$ Thus, there exists a tuple  $(\alpha'_1,\alpha'_2, \ldots, \alpha'_r) \in \mathbb{R}_{+}^r $ with $\alpha'_j =0,\,\sum\limits_{i=1}^r \alpha'_i = c$ and $\sum\limits_{i \neq j}  \left( a+ \sqrt{\alpha_i}  \right)^p > \sum\limits_{i \neq j}  \left( a+ \sqrt{\alpha'_i}  \right)^p.$ Hence, $\sum\limits_{i=1}^r \left| a+ \sqrt{\alpha_i}  \right|^p > \sum\limits_{i=1}^r \left| a+ \sqrt{\alpha'_i}  \right|^p,$ which is not possible. Using the same argument repeatedly at most $r-1$ times we can conclude that $\alpha_i \geq 0,\;1 \leq i \leq r.$
\end{rem}
\noindent
We shall now give a characterization for a $2-$erasure spectrally optimal dual pair. Recall that if $(F,G) \in \mathcal{E}_{1}^p,$ then $\mathcal{E}_{2}^p (F,G) = \left\{\dfrac{1}{\binom{N}{2}} \sum\limits_{i \neq j}\left|  \frac{n}{N} +  \sqrt{\alpha_{ij} \alpha_{ji}} \right|^p\right\}^{\frac{1}{p}} .$

\begin{thm}\label{thm3point7} 
	Let $\mathcal{H}$ be an $n$ dimensional real Hilbert space. Then, $\delta_{2}^p \geq \frac{n}{N} + \sqrt{\frac{nN - n^2}{N^2(N-1)}}. $ Moreover, if a $2-$uniform $(N,n)$ dual pair exists, then  $\delta_{2}^p = \frac{n}{N} + \sqrt{\frac{nN - n^2}{N^2(N-1)}}. $
\end{thm}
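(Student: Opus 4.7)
The plan is to combine Proposition~\ref{prop3point1} with formula \eqref{eqn3point3} and Lemma~\ref{lem3point5}, using an idempotency identity to supply the missing constraint. First, since we are minimising $\mathcal{E}_{2}^p$ over $\mathcal{E}_{1}^p$, Proposition~\ref{prop3point1} tells me that every candidate $(F,G)$ satisfies $\alpha_{ii}=\langle g_i,f_i\rangle=\tfrac{n}{N}$, so the expression from \eqref{eqn3point3} simplifies to
\[
\mathcal{E}_{2}^p(F,G)^p\,\binom{N}{2}=\sum_{\{i,j\}}\Bigl|\tfrac{n}{N}+\sqrt{\alpha_{ij}\alpha_{ji}}\,\Bigr|^p.
\]

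The key input I need is an invariant of the dual pair that controls $\sum \alpha_{ij}\alpha_{ji}$. I would obtain it from the fact that $P:=\Theta_F\Theta_G^*$ is an $N\times N$ idempotent of rank $n$, since $\Theta_G^*\Theta_F=I$. Writing out $P_{ij}=\langle g_j,f_i\rangle=\alpha_{ji}$, the equation $P^2=P$ at the $(i,i)$-entry gives $\sum_k \alpha_{ki}\alpha_{ik}=\alpha_{ii}=\tfrac{n}{N}$, so $\sum_{k\neq i}\alpha_{ki}\alpha_{ik}=\tfrac{n}{N}-\tfrac{n^2}{N^2}$. Summing over $i$ and collapsing the ordered pair sum into an unordered one (using $\alpha_{ij}\alpha_{ji}=\alpha_{ji}\alpha_{ij}$) yields
\[
\sum_{i<j}\alpha_{ij}\alpha_{ji}\;=\;\frac{n(N-n)}{2N}.
\]

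With this constraint in hand I would invoke Lemma~\ref{lem3point5} (and the Remark handling possibly-negative $\alpha_{ij}\alpha_{ji}$, which is relevant in the real case) with $a=\tfrac{n}{N}$, $r=\binom{N}{2}$, and $c=\tfrac{n(N-n)}{2N}$. The quotient $c/r$ comes out to $\tfrac{nN-n^2}{N^2(N-1)}$, so the lemma gives
\[
\sum_{i<j}\Bigl|\tfrac{n}{N}+\sqrt{\alpha_{ij}\alpha_{ji}}\,\Bigr|^p\;\ge\;\binom{N}{2}\Bigl(\tfrac{n}{N}+\sqrt{\tfrac{nN-n^2}{N^2(N-1)}}\Bigr)^p.
\]
Dividing by $\binom{N}{2}$, taking $p$-th roots, and taking the infimum over $(F,G)\in\mathcal{E}_{1}^p$ yields the claimed lower bound on $\delta_{2}^p$. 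For the second statement, if a $2$-uniform dual pair exists then $\alpha_{ij}\alpha_{ji}=c_0$ is constant for $i\ne j$, and the constraint forces $c_0=\tfrac{nN-n^2}{N^2(N-1)}$; the formula then collapses to the lower bound exactly, so equality holds.

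The main obstacle is the constraint step: identifying that $\Theta_F\Theta_G^*$ is idempotent and then extracting the right scalar identity requires careful book-keeping of indices (the $\alpha_{ij}$ convention has $g$'s paired with the \emph{first} index while $P$'s matrix entries have $g$'s paired with the \emph{second}, and one must be careful about the ordered-versus-unordered factor of $2$). Once that equality is in place, Lemma~\ref{lem3point5} does all the remaining work, and the equality case is essentially a direct verification.
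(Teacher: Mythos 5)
Your proposal is correct and follows essentially the same route as the paper: restrict to $(F,G)\in\mathcal{E}_{1}^p$ so that $\mathcal{E}_{2}^p(F,G)^p\binom{N}{2}=\sum_{i<j}\bigl|\tfrac{n}{N}+\sqrt{\alpha_{ij}\alpha_{ji}}\bigr|^p$, and then apply Lemma~\ref{lem3point5}. The one place you go beyond the published argument is in explicitly deriving the constraint $\sum_{i<j}\alpha_{ij}\alpha_{ji}=\tfrac{n(N-n)}{2N}$ from the idempotency of $\Theta_F\Theta_G^*$; the paper invokes Lemma~\ref{lem3point5} without ever identifying the constant $c$ (it is only implicit in the $2$-uniform equality case), so your computation supplies a step the paper leaves unstated, and it is carried out correctly, including the ordered-versus-unordered factor of $2$.
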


\begin{proof}
	For a dual pair  $(F,G) \in \mathcal{E}_{1}^p,$ by \eqref{eqn3point3}, we have 
	$$ 	\mathcal{E}_{2}^p (F,G) = \left\{\frac{1}{\binom{N}{2}} \sum\limits_{i \neq j}\left|  \frac{n}{N} +  \sqrt{\alpha_{ij} \alpha_{ji}} \right|^p\right\}^{\frac{1}{p}} .  $$
	Now, using Lemma \ref{lem3point5},
	\begin{align*} 
		\mathcal{E}_{2}^p (F,G) &= \left\{\frac{1}{\binom{N}{2}} \sum\limits_{i \neq j}\left|  \frac{n}{N} +  \sqrt{\alpha_{ij} \alpha_{ji}} \right|^p\right\}^{\frac{1}{p}} \\& \geq \left(\frac{1}{\binom{N}{2}} \times \binom{N}{2} \left( \frac{n}{N} + \sqrt{\frac{nN - n^2}{N^2(N-1)}}\right)^p \right)^{\frac{1}{p}} \\& = \frac{n}{N} + \sqrt{\frac{nN - n^2}{N^2(N-1)}}
	\end{align*}
Therefore, $\delta_{2}^p = \inf\left\{ \mathcal{E}_{2}^p (F,G) : (F,G)\in \mathcal{E}_{1}^p    \right\} \geq \frac{n}{N} + \sqrt{\frac{nN - n^2}{N^2(N-1)}}. $\\
Moreover, if a $2-$uniform $(N,n)$ dual pair $(F',G')$ exist,  then $\alpha'_{ij} \alpha'_{ji} = \frac{nN - n^2}{N^2(N-1)},\,\forall i \neq j. $ Therefore, $	\mathcal{E}_{2}^p (F',G') =   \frac{n}{N} + \sqrt{\frac{nN - n^2}{N^2(N-1)}}$ and hence, $\delta_{2}^p = \frac{n}{N} + \sqrt{\frac{nN - n^2}{N^2(N-1)}}.$
\end{proof}

	\section{Optimal dual pairs under operator norm}
In this section, we look for that dual pair which minimizes the error taking operator norm as the error measurement $\mathcal{M}.$ Let us define for $p >1,$
\begin{align*}
	&\mathcal{O}_{1}^p (F,G) := \left\{\frac{1}{N} \sum\limits_{D \in \mathcal{D}^{(1)}} \left\| \Theta_{G}^*D\Theta_{F}\right\|^p\right\}^{\frac{1}{p}} \;\\
	& \Delta_{1}^p  := \inf\left\{ \mathcal{O}_{1}^p (F,G) : (F,G)\, \text{is a $(N,n)$ dual pair }   \right\}\\
	&  \mathcal{O}_{1}^p := \left\{(F',G'): \mathcal{O}_{1}^p (F',G') =  \mathcal{O}_{1}^p \right\}
\end{align*}
Every element in $ \mathcal{O}_{1}^p$ is called a $1-$ersaure  optimal dual pair.\\
For a dual pair $(F,G),$ if the error occurs in the $i^{th}$ position, then $\left\|\Theta_{G}^*D\Theta_{F}\right\| = \|f_i\|\,\| g_i \|  $ and so,
\begin{equation}\label{equation6}
	\mathcal{O}_{1}^p (F,G) = \left\{\frac{1}{N} \sum \limits_{i=1}^N\|f_i\|^p\,\| g_i \| ^p\right\}^{\frac{1}{p}}.
\end{equation}\\
We shall now give a characterization for an $(N,n)$ dual pair to be a $1-$erasure optimal by taking operator norm as the error measurement.

	\begin{prop}\label{prop4point1}
	Let $\mathcal{H}$ be an $n$ dimensional Hilbert space. Then the  value of $ \Delta_{1}^p$ is $\frac{n}{N}$ and  $ \mathcal{O}_{1}^p = \left\{(F,G): \frac{1}{N} \sum\limits_{i=1}^N \left(\|f_i\|\,\| g_i \| \right)^p =  \left(  \frac{n}{N}  \right)^p \right\}$ 
\end{prop}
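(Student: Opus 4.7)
The plan is to mirror the structure of Proposition \ref{prop3point1}, replacing the $|\langle f_i,g_i\rangle|$ estimate with the Cauchy--Schwarz bound $\|f_i\|\,\|g_i\|\geq |\langle f_i,g_i\rangle|$ and then using the trace identity $\sum_{i=1}^N\langle f_i,g_i\rangle=n$ that holds for every $(N,n)$ dual pair. The characterization part then comes out essentially by unwinding definitions.

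First I would establish the lower bound $\mathcal{O}_1^p(F,G)\ge n/N$ for an arbitrary $(N,n)$ dual pair $(F,G)$. Applying Lemma \ref{lemmma2point4} with $s=N$, $p_i=1$, $a_i=\|f_i\|\,\|g_i\|$ and exponent $p>1$ gives
\begin{align*}
\mathcal{O}_1^p(F,G)=\left\{\frac{1}{N}\sum_{i=1}^N\bigl(\|f_i\|\,\|g_i\|\bigr)^p\right\}^{1/p}
\ge \frac{1}{N}\sum_{i=1}^N \|f_i\|\,\|g_i\|.
\end{align*}
Then the Cauchy--Schwarz inequality together with the trace identity produces
\begin{align*}
\frac{1}{N}\sum_{i=1}^N \|f_i\|\,\|g_i\|\ \ge\ \frac{1}{N}\sum_{i=1}^N |\langle f_i,g_i\rangle|\ \ge\ \frac{1}{N}\Bigl|\sum_{i=1}^N\langle f_i,g_i\rangle\Bigr|=\frac{n}{N},
\end{align*}
which chains with the previous display to give $\mathcal{O}_1^p(F,G)\ge n/N$, hence $\Delta_1^p\ge n/N$.

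Next I would exhibit a dual pair achieving this bound. Exactly as in Proposition \ref{prop3point1}, by invoking Theorem~2.1 of \cite{casa2}, there is a uniform Parseval frame $F'=\{f'_i\}_{i=1}^N$ with $\|f'_i\|^2=n/N$ for all $i$. Taking $G'=F'$ (its canonical dual, since $F'$ is Parseval), direct substitution into \eqref{equation6} gives $\mathcal{O}_1^p(F',F')=\{\frac{1}{N}\sum_{i=1}^N(n/N)^p\}^{1/p}=n/N$. Combined with the lower bound this yields $\Delta_1^p=n/N$.

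Finally, the set description of $\mathcal{O}_1^p$ is immediate: by definition $(F,G)\in\mathcal{O}_1^p$ iff $\mathcal{O}_1^p(F,G)=\Delta_1^p=n/N$, and raising both sides to the $p$-th power this is equivalent to $\frac{1}{N}\sum_{i=1}^N(\|f_i\|\,\|g_i\|)^p=(n/N)^p$. I do not expect any real obstacle here; unlike the spectral-radius case, the characterization does not force any additional uniformity (it only constrains the $p$-th power mean of $\|f_i\|\|g_i\|$ to equal $n/N$), so no extra argument of the ``$b_j=0$, $a_j\ge 0$'' type from Proposition \ref{prop3point1} is needed. If anything warrants a little care, it is just being explicit that the Cauchy--Schwarz step together with $\sum\langle f_i,g_i\rangle=n$ delivers the $n/N$ lower bound without requiring any sign information on the inner products.
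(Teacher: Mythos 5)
Your proposal is correct and follows essentially the same route as the paper: the power–mean inequality (Lemma \ref{lemmma2point4}), Cauchy--Schwarz, and the trace identity $\sum_{i=1}^N\langle f_i,g_i\rangle=n$ give the lower bound $n/N$, a uniform Parseval frame paired with itself attains it, and the set description of $\mathcal{O}_1^p$ is just the definition unwound. Your added remark that no further uniformity is forced at this stage is consistent with the paper, which defers that to Theorem \ref{thm4point2}.
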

\begin{proof}
	For an $(N,n) $ dual pair $(F,G),$ it is easy to see that
	\begin{align*}
		\mathcal{O}_{1}^p (F,G) = \left\{\frac{1}{N} \sum \limits_{i=1}^N\|f_i\|^p\,\| g_i \| ^p\right\}^{\frac{1}{p}} \geq \left\{ \left(\frac{\sum\limits_{i=1}^N \|f_i\|\,\| g_i \|}{N}  \right)^p\right\}^{\frac{1}{p}} \geq \frac{\sum\limits_{i=1}^N \left|\langle f_i, g_i \rangle \right|}{N} \geq \frac{n}{N}
	\end{align*}
Suppose there exists a uniform Parseval frame $F'$. Then, we will have  $\mathcal{O}_{1}^p (F',F') = \frac{n}{N} $ and hence, $ \Delta_{1}^p  = \frac{n}{N}.$ Therefore by definition we have 
\begin{align*}
	\mathcal{O}_{1}^p  &= \left\{(F,G): \mathcal{O}_{1}^p (F,G) = \frac{n}{N} \right\} \\& = \left\{(F,G): \left(\frac{1}{N} \sum \limits_{i=1}^N\|f_i\|\,\| g_i \| ^p\right)^{\frac{1}{p}} = \frac{n}{N} \right\} 
\end{align*}
\end{proof}
 Now, we present a necessary condition for a dual pair to be $1-$erasure optimal. Specifically, we examine the criteria that must be satisfied for a dual pair to maintain optimality even when one element is erased. This involves analyzing the structural properties and relationships within the dual pair to ensure minimal loss for $1-$erasure.
 
 \begin{thm}\label{thm4point2}
 	If $(F,G) \in \mathcal{O}_{1}^p $ then, $(F,G)$ is a $1-$uniform $(N,n)$ dual pair for $\mathcal{H}.$ 
 \end{thm}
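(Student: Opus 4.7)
The plan is to revisit the chain of inequalities used in the proof of Proposition \ref{prop4point1} and show that membership in $\mathcal{O}_{1}^p$ forces \emph{every} intermediate inequality to be an equality, from which $1$-uniformity is extracted step by step.

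Specifically, if $(F,G)\in \mathcal{O}_{1}^p$, then Proposition \ref{prop4point1} gives $\mathcal{O}_{1}^p(F,G)=\tfrac{n}{N}$. But in the proof of that proposition one has the chain
$$
\Bigl\{\tfrac{1}{N}\sum_{i=1}^N\|f_i\|^p\|g_i\|^p\Bigr\}^{1/p}
\;\geq\; \frac{\sum_{i=1}^N\|f_i\|\,\|g_i\|}{N}
\;\geq\; \frac{\sum_{i=1}^N|\langle f_i,g_i\rangle|}{N}
\;\geq\; \frac{n}{N},
$$
so every one of the three inequalities must be an equality. First I would use Lemma \ref{lemmma2point4} applied to $a_i=\|f_i\|\|g_i\|$ with weights $p_i=1$: equality in the first inequality forces $\|f_i\|\,\|g_i\|$ to be independent of $i$, say equal to some constant $c>0$. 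Next I would use that equality in the second inequality is the equality case of Cauchy--Schwarz, hence $|\langle f_i,g_i\rangle| = \|f_i\|\,\|g_i\| = c$ for every $i$.

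Finally, equality in the last inequality reads $\sum_{i=1}^N|\langle f_i,g_i\rangle|=n$, while the dual pair identity $\sum_{i=1}^N\langle f_i,g_i\rangle = n$ is always available. Writing $\langle f_i,g_i\rangle=a_i+ib_i$ with $a_i,b_i\in\mathbb{R}$ (in the complex case) and repeating the short argument from the proof of Proposition \ref{prop3point1}, we get $b_i=0$ and $a_i\geq 0$, so each $\langle f_i,g_i\rangle$ is a nonnegative real. Combined with $|\langle f_i,g_i\rangle|=c$, this yields $\langle f_i,g_i\rangle=c$ for all $i$. Summing gives $Nc=n$, so $c=n/N$, and $(F,G)$ is $1$-uniform by definition.

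The argument is essentially a careful equality-case analysis, so there is no real obstacle; the only subtlety is to notice that $(F,G)\in\mathcal{O}_{1}^p$ pins down \emph{all three} inequalities simultaneously (not just the first, which appears explicitly in the characterization of $\mathcal{O}_{1}^p$ in Proposition \ref{prop4point1}) — this is automatic because the left-most and right-most quantities both equal $\tfrac{n}{N}$.
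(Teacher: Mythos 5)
Your proposal is correct and follows essentially the same route as the paper: both arguments squeeze the chain of inequalities from the proof of Proposition \ref{prop4point1} into equalities and then extract, in turn, constancy of $\|f_i\|\,\|g_i\|$ from the equality case of the power-mean inequality (Lemma \ref{lemmma2point4}, which the paper packages as Lemma \ref{lemma4point3}), the equality case of Cauchy--Schwarz, and the nonnegativity of each $\langle f_i,g_i\rangle$ from $\sum_i\langle f_i,g_i\rangle=\sum_i|\langle f_i,g_i\rangle|=n$. The only difference is the order in which the three equality cases are exploited, which does not affect the substance of the argument.
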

In order to prove the above theorem, we make use of the following lemma.

\begin{lem}\label{lemma4point3}
	Let $\{a_i\}_{i=1}^s$ be a sequence of non-negative real numbers satisfying $\sum\limits_{i=1}^s a_i= t$ and $\sum\limits_{i=1}^s a^{p}_i= s\left(\frac{t}{s}  \right)^p,$ for some $p>1.$ Then, $a_i = \frac{t}{s},\;1 \leq i \leq s.$
\end{lem}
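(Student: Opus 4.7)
The plan is to recognize the hypothesis as precisely the equality case of the power mean inequality and invoke the equality statement of Lemma \ref{lemmma2point4}. Taking weights $p_1 = p_2 = \cdots = p_s = 1$ and exponent $m = p > 1$, that lemma yields
$$\frac{a_1^p + a_2^p + \cdots + a_s^p}{s} \geq \left(\frac{a_1 + a_2 + \cdots + a_s}{s}\right)^p,$$
with equality if and only if all the $a_i$ coincide. The hypotheses $\sum_{i=1}^s a_i = t$ and $\sum_{i=1}^s a_i^p = s(t/s)^p$ make both sides equal to $(t/s)^p$, so equality holds. Consequently $a_1 = a_2 = \cdots = a_s$, and combining this with $\sum_{i=1}^s a_i = t$ forces $a_i = t/s$ for every $i$.

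The only subtlety is that Lemma \ref{lemmma2point4} is formulated for strictly positive $a_i$, whereas here they are only assumed to be non-negative. I would dispose of this in a short preliminary step by ruling out zero entries. If $t = 0$, then $\sum a_i = 0$ together with $a_i \geq 0$ gives each $a_i = 0 = t/s$, and the conclusion is immediate. If $t > 0$ and some $k$ indices (with $1 \leq k \leq s-1$) carry $a_i = 0$, then applying Lemma \ref{lemmma2point4} only to the $s-k$ strictly positive terms yields
$$\frac{s(t/s)^p}{s-k} \geq \left(\frac{t}{s-k}\right)^p,$$
which simplifies to $(s-k)^{p-1} \geq s^{p-1}$, contradicting $p > 1$. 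Hence no $a_i$ vanishes, and Lemma \ref{lemmma2point4} applies verbatim to finish the argument.

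I do not expect a genuine obstacle. The statement is essentially the equality case of Jensen's inequality for the strictly convex function $x \mapsto x^p$ on $[0,\infty)$, and the preliminary lemma already packages both the inequality and its equality condition. The only piece of bookkeeping is the short calculation above that pushes all $a_i$ into the strictly positive regime demanded by Lemma \ref{lemmma2point4}.
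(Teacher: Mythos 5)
Your proof is correct and follows essentially the same route as the paper: apply Lemma \ref{lemmma2point4} with unit weights and exponent $m=p>1$, observe that the hypotheses force equality, and invoke the equality case to conclude all $a_i$ coincide. Your additional preliminary step ruling out zero entries (which the paper's proof silently skips, even though Lemma \ref{lemmma2point4} is stated only for strictly positive $a_i$) is a correct and worthwhile patch, and the contradiction $(s-k)^{p-1}\geq s^{p-1}$ you derive there checks out.
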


\begin{proof}
	Using Lemma \ref{lemmma2point4}, it can be easily seen that
	$$s\left(\frac{t}{s}  \right)^p =  \sum\limits_{i=1}^s a^{p}_i \geq s \left(\frac{\sum\limits_{i=1}^s a_i}{s}\right)^p = s\left(\frac{t}{s}  \right)^p.$$ 
	This implies that, $\dfrac{\sum\limits_{i=1}^s a^{p}_i}{s} = \left(\dfrac{\sum\limits_{i=1}^s a_i}{s}\right)^p . $ Thus by Lemma \ref{lemmma2point4}, $a_i = a_j $ for all $i \neq j$ and hence $a_i = \frac{t}{s},\;1 \leq i \leq s.$
\end{proof}

\begin{proof}[\textbf{Proof of Theorem \ref{thm4point2}}]
	Let $(F,G) \in \mathcal{O}_{1}^p .$ Then by Proposition \ref{prop4point1}, we have
	\begin{align*}
		n^p = \left(\sum\limits_{i=1}^N \langle f_i, g_i \rangle \right)^p \leq \left(\sum\limits_{i=1}^N \| f_i\| \, \|g_i \| \right)^p \leq N^{p-1} \sum\limits_{i=1}^N \left( \| f_i\| \, \|g_i \| \right)^p \leq N^{p-1} N \left(\frac{n}{N}\right)^p = n^p.
	\end{align*} 
Consequently, we obtain, $\left(\sum\limits_{i=1}^N \langle f_i, g_i \rangle \right)^p = \left(\sum\limits_{i=1}^N \| f_i\| \, \|g_i \| \right)^p .$ Thus $\sum\limits_{i=1}^N \langle f_i, g_i \rangle = \sum\limits_{i=1}^N \| f_i\| \, \|g_i \| .$ This in turn leads to, $\sum\limits_{i=1}^N \| f_i\| \, \|g_i \| = \sum\limits_{i=1}^N \langle f_i, g_i \rangle \leq \left| \sum\limits_{i=1}^N \langle f_i, g_i \rangle  \right|  \leq \sum\limits_{i=1}^N \left| \langle f_i, g_i \rangle \right| \leq \sum\limits_{i=1}^N \| f_i\| \, \|g_i \|.$ Accordingly, $\sum\limits_{i=1}^N \langle f_i, g_i \rangle = \sum\limits_{i=1}^N \left| \langle f_i, g_i \rangle \right|. $ Let $\langle f_j , g_j \rangle = a_j + ib_j,\, 1\leq j \leq N,$ where $a_j,b_j \in \mathbb{R}.$ Then, $\sum\limits_{j=1}^N a_j = n = \sum\limits_{j=1}^N \sqrt{a_j^2 +b_j^2,}$ which shows that $b_j = 0$  and also $a_j \geq 0,\, \forall\, j.$ So, $\langle f_i, g_i \rangle = \left| \langle f_i, g_i \rangle \right|,\,1\leq i \leq N.$ By proposition \ref{prop4point1}, we have $\sum\limits_{i=1}^N \| f_i\| \, \|g_i \| = n $ and $\sum\limits_{i=1}^N \left(\| f_i\| \, \|g_i \|\right)^p = N\left(\frac{n}{N} \right)^p.$ Then by Lemma\ref{lemma4point3}, we have $\| f_i\| \, \|g_i \|= \frac{n}{N}. $ Thus, $ \langle f_i, g_i \rangle = \frac{n}{N},$ as  $\sum\limits_{i=1}^N \langle f_i, g_i \rangle = \sum\limits_{i=1}^N \| f_i\| \, \|g_i \| .$
 \end{proof}
\noindent
A theorem analogous to Theorem\ref{thm3point7} states as follows.

\begin{thm}\label{thm4point4unitary}
	For a unitary operator $U$ on $\mathcal{H}$, a dual pair $(F,G) \in \mathcal{O}_{1}^p$ if and only if $(UF,UG)  \in \mathcal{O}_{1}^p.$ 
\end{thm}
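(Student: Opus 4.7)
The plan is to mirror the argument sketched after Theorem \ref{thm3point7}, replacing the quantity $|\langle f_i, g_i \rangle|$ by $\|f_i\|\,\|g_i\|$ throughout. The theorem is essentially a statement that $\mathcal{O}_1^p(F,G)$ is a unitary invariant of the dual pair, together with the preliminary observation (already recorded in the paper) that $UG$ is a dual of $UF$ whenever $G$ is a dual of $F$. Once these two facts are in hand, optimality of $(F,G)$ transfers to $(UF,UG)$ and vice versa, because the infimum $\Delta_1^p$ is taken over all $(N,n)$ dual pairs and the map $(F,G)\mapsto (UF,UG)$ is a bijection on this set.

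First I would recall, for clarity, that if $U$ is unitary then $\Theta_{UF} = \Theta_F U^*$ and $\Theta_{UG}^* = U\Theta_G^*$, from which $\Theta_{UG}^*\Theta_{UF} = U\Theta_G^*\Theta_F U^* = UU^* = I$, so $UG$ is indeed a dual of $UF$. Next I would invoke the formula (\ref{equation6}) applied to the dual pair $(UF,UG)$: since unitary operators preserve norms, $\|Uf_i\| = \|f_i\|$ and $\|Ug_i\| = \|g_i\|$ for each $i$, hence
\begin{equation*}
\mathcal{O}_1^p(UF,UG) = \left\{\frac{1}{N}\sum_{i=1}^N \|Uf_i\|^p\,\|Ug_i\|^p\right\}^{1/p} = \left\{\frac{1}{N}\sum_{i=1}^N \|f_i\|^p\,\|g_i\|^p\right\}^{1/p} = \mathcal{O}_1^p(F,G).
\end{equation*}

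Finally, since $\Delta_1^p$ is the infimum of $\mathcal{O}_1^p$ over all $(N,n)$ dual pairs, and conjugation by $U$ is a bijection on this set of dual pairs that leaves $\mathcal{O}_1^p$ invariant, $(F,G)$ attains $\Delta_1^p$ if and only if $(UF,UG)$ does, which is exactly the statement that $(F,G)\in\mathcal{O}_1^p$ iff $(UF,UG)\in\mathcal{O}_1^p$. There is no real obstacle here; the only point that deserves care is the verification that $UG$ remains a valid dual of $UF$ so that we are genuinely comparing two admissible elements of the search space, and this is immediate from the adjoint identity above.
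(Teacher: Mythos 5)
Your proposal is correct and matches the paper's intended argument: the paper omits the proof, noting only that the theorem is analogous to Theorem \ref{thm3point7}, whose proof is exactly the invariance computation you give, here with $|\langle f_i,g_i\rangle|$ replaced by $\|f_i\|\,\|g_i\|$ and using that unitaries preserve norms. Your additional verifications that $UG$ is a dual of $UF$ and that conjugation by $U$ is a bijection on the set of $(N,n)$ dual pairs are exactly the (unstated) details needed to pass from invariance of $\mathcal{O}_1^p$ to preservation of optimality.
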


\par  Suppose the frames under consideration have exactly $n$ elements. Then, such  frames have only one dual, namely the canonical dual. So, the analysis in this case boils down to the collection of all $(n,n)$ dual pairs of the form $(F,S_{F}^{-1}F).$ For any $(n,n)$ dual pair $(F,S_{F}^{-1}F),$ we have  $$\mathcal{O}_{1}^p (F,S_{F}^{-1}F) = \left\{\frac{1}{n} \sum \limits_{i=1}^n\|f_i\|\,\|S_{F}^{-1}f_i  \| ^p\right\}^{\frac{1}{p}} \geq \left(\dfrac{1}{n}\sum \limits_{i=1}^n\left| \langle f_i, S_{F}^{-1}f_i  \rangle\right|^p\right)^{\frac{1}{p}} \geq \dfrac{\sum \limits_{i=1}^n\left| \langle f_i, S_{F}^{-1}f_i  \rangle\right|}{n} \geq 1,$$
by using Lemma\ref{lemmma2point4}. Further, for any Parseval frame $F'=\{f'_i\}_{i=1}^n,$ it is easy to see that $\mathcal{O}_{1}^p (F',S_{F'}^{-1}F') =  \left(\dfrac{\sum \limits_{i=1}^n\|f_i\|^{2p}}{n}\right)^{\frac{1}{p}} =  1 ,$ as  $F$ is both a Parseval frame as well as a basis and it is well-known that  a basis in a Hilbert space is a Parseval frame if and only if it is orthonormal. This leads to $\mathcal{O}_{1}^p  = 1.$ Thus, every $(n,n)$ dual pair of the form $(F,F),$ where $F$ is a Parseval frame, is a $1-$erasure optimal dual pair under operator norm. In fact in this context, it can be inferred that any equal-norm tight frame $\{f_i\}_{i=1}^n$ and its canonical dual serves as an optimal dual pair.

\section{Graph theoretic approach}

Let $\Gamma$ be a simple graph with the vertex set $V(\Gamma)=\{v_1,v_2,…,v_N\}.$ Two vertices $v_i$ and $v_j$ are adjacent if there is an edge connecting them, which we denote as $v_i \sim v_j$. The graph $G$ is a complete graph if  $v_i \sim v_j$ for all $i \neq j.$ The graph $G$ is a complete graph if  $v_i \sim v_j$ for all $i \neq j.$ The degree of a vertex $v_i$ is the count of edges connected to $v_i$ is represented by $d(v_i)$ or simply $d_i$. A vertex $v_i$ is termed a null vertex if $d(v_i)=0.$ A graph $\Gamma$ is described as a regular graph if $d_i=d_j$ for all $1\leq i,j \leq N.$ If $d_i=d_j=r$ for all $i,j,$ then $\Gamma$ is specifically called an $r-$regular graph.\\
Now, we examine some matrices associated with a simple graph $\Gamma.$ The degree matrix of $\Gamma,$ denoted by $\mathcal{D}(\Gamma),$ is an $N \times N$ diagonal matrix where $\mathcal{D}(\Gamma)= \text{diag}(d_1,d_2,\ldots,d_n).$ The adjacency matrix of the graph $\Gamma$, denoted as $\mathcal{A}(\Gamma)=[a_{ij}]_{N \times N}$ is an $N \times N$ matrix defined as:

$$  	a_{ij} = \begin{cases}
	1,\, \text{for}\; v_i \sim v_j \;\text{and}\; i \neq j \\
	0,\,\text{elsewhere}	.
\end{cases}  $$$$
 $$

The Laplacian matrix of a graph $\Gamma$ is defined as $\mathcal{L}(\Gamma):=\mathcal{D}(\Gamma)-\mathcal{A}(\Gamma)$ The Laplacian matrix is positive semi-definite. For a simple graph $\Gamma$ with $N$ vertices and $k$ components, the rank of $L(\Gamma)$ is $N-k.$ For a comprehensive study of matrices associated with simple graphs and other types of graphs, readers can refer to \cite{bapat}. It is well established that $0$ is always an eigenvalue of the Laplacian matrix and is the smallest eigenvalue. If $\Gamma$ is a graph with Laplacian matrix $\mathcal{L}$ and $\nu_1 \geq \nu_2\geq \ldots \geq\nu_k$ are the non-zero eigenvalues of $\mathcal{L},$ then the second smallest eigenvalue, $\nu_{k-1}$, is referred to as the \textit{algebraic connectivity} of $\Gamma.$ \\
Let $F= \{f_i\}_{i=1}^N$ be a frame for $\mathcal{H}$ with Gramian matrix $\mathcal{G}.$ If $\Gamma$ corresponds to a simple graph with Laplacian matrix $\mathcal{L}$ such that $\Gamma=\mathcal{L}$, then $F$ is known as a frame generated by the graph $\Gamma.$ For short, we refer to $F$ as a $\Gamma(N,n)-$frame for $\mathcal{H}.$ Any two frames generated by a simple graph $\Gamma$ are unitarily equivalent \cite{deep1}.
 In \cite{deep2}, the authors employed Laplacian matrices of graphs to construct finite frames. Before delving into this construction, let us first define $\mathcal{L}_\Gamma(N,n)-$frames. Let $\Gamma$ be a simple graph with $N$ vertices and $N-k$ components. Suppose $L$ can be decomposed as $\mathcal{L}= M \textit{diag}(\lambda_1, \lambda_2, \ldots,\lambda_n, 0,\ldots,0)M^*,\,$where $\lambda_1,\lambda_2, \ldots,\lambda_n$ are the non-zero eigenvalues of $L$ and $M$ is a corresponding matrix of eigenvectors of $L.$ Consider $\{e_i\}_{i=1}^N$ to be the standard canonical orthonormal basis of $\mathbb{C}^N$. Define $B$ as $B= \textit{diag}\left(\sqrt{\lambda_1}, \sqrt{\lambda_2}, \ldots,\sqrt{\lambda_n}\right)$, where $M_1$ is a submatrix of $M$ consisting of the first $n$ columns. Then $\{B(e_i)\}_{i=1}^N$ is a frame for $\mathcal{H}$ and it is called an $\mathcal{L}_G(N,n)-$frame for $\mathcal{H}$\cite{deep1}. Let $F$ be a frame with Gramian matrix $\mathcal{G}.$ If $\Gamma$ is a graph with Laplacian matrix $\mathcal{L}$ such that $ \mathcal{G}= \mathcal{L},$ then $F$ is called a frame generated by the
 graph $\Gamma$ for $\mathcal{H}.$ In short, we call $F$ as a $\Gamma(N,n)-$frame for $\mathcal{H}.$

\begin{thm}\cite{bapat}\label{thm5point1}
	Let $\Gamma$ is a connected graph of $N$ vertices where $\eta$ and $\eta'$ denote the largest and smallest vertex degrees, respectively. If the eigenvalues of the Laplacian matrix $\mathcal{L}(\Gamma)$ are $\nu_1 \geq \nu_2 \geq \ldots \nu_N =0,$ then $\nu_1 \geq \eta + 1$ and $\nu_{N-1} \leq \frac{N}{N-1}\eta'.$
\end{thm}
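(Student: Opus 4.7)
The plan is to prove the two inequalities separately via the Rayleigh quotient characterization of the extreme eigenvalues of $\mathcal{L}(\Gamma)$, exploiting the edge-sum identity $x^{T}\mathcal{L}(\Gamma)x=\sum_{(i,j)\in E(\Gamma)}(x_{i}-x_{j})^{2}$ for any $x\in\mathbb{R}^{N}$, which makes test-vector computations clean.

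For the lower bound $\nu_{1}\geq\eta+1$, I would fix a vertex $v$ of maximum degree $\eta$ and try the test vector defined by $x_{v}=\eta$, $x_{u}=-1$ at each neighbor $u$ of $v$, and $x_{w}=0$ elsewhere. A direct calculation gives $x^{T}x=\eta^{2}+\eta=\eta(\eta+1)$. Partitioning $E(\Gamma)$ into edges incident to $v$, edges between two neighbors of $v$, edges from a neighbor of $v$ to a non-neighbor, and edges entirely outside the closed neighborhood of $v$, the second and fourth families contribute $0$, the first contributes $\eta(\eta+1)^{2}$, and the third contributes some $c\geq 0$. Hence $x^{T}\mathcal{L}(\Gamma)x=\eta(\eta+1)^{2}+c$, so the Rayleigh quotient evaluates to $(\eta+1)+c/[\eta(\eta+1)]\geq\eta+1$, and $\nu_{1}=\max_{x\neq 0}(x^{T}\mathcal{L}(\Gamma)x)/(x^{T}x)$ yields the claim.

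For the algebraic connectivity bound, I would use the Courant--Fischer characterization
\begin{equation*}
\nu_{N-1}=\min_{\substack{x\neq 0 \\ x\perp\mathbf{1}}}\frac{x^{T}\mathcal{L}(\Gamma)x}{x^{T}x},
\end{equation*}
which is available because $\Gamma$ connected forces $\mathbf{1}$ to span the kernel of $\mathcal{L}(\Gamma)$. Letting $v$ be a vertex of minimum degree $\eta'$, I would take $x=\chi_{v}-(1/N)\mathbf{1}$, so that $x_{v}=(N-1)/N$, $x_{w}=-1/N$ for $w\neq v$, and $x\perp\mathbf{1}$ automatically. One computes $x^{T}x=(N-1)/N$, while in the edge sum only edges incident to $v$ contribute a nonzero value (every other edge joins two vertices where $x$ takes the same value $-1/N$); each such edge contributes $1$, yielding $x^{T}\mathcal{L}(\Gamma)x=\eta'$. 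The resulting Rayleigh quotient is $N\eta'/(N-1)$, establishing the upper bound.

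There is no conceptual obstacle here; the main work is careful bookkeeping of edge contributions in the first step and verifying that $\chi_{v}-\mathbf{1}/N$ is the correct orthogonality-preserving perturbation in the second. Connectedness of $\Gamma$ enters exactly once in an essential way: to ensure that $\ker\mathcal{L}(\Gamma)$ is one-dimensional so that the Courant--Fischer formula for $\nu_{N-1}$ reduces to a single orthogonality constraint against $\mathbf{1}$.
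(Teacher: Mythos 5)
Your proof is correct. Note that the paper does not prove this statement at all---it is imported verbatim from the reference \cite{bapat}---so there is no in-paper argument to compare against; your two Rayleigh-quotient computations are the standard ones (the second test vector $\chi_v-\tfrac{1}{N}\mathbf{1}$ is exactly Fiedler's classical proof of the bound on the algebraic connectivity, and the first test vector supported on the closed neighborhood of a maximum-degree vertex correctly yields $\nu_1\geq\eta+1$, the only implicit assumption being $N\geq 2$ so that $\eta\geq 1$ and the test vector is nonzero).
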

The following theorem explores the relationship between a connected graph's Laplacian matrix and tight frames. Specifically, it considers a connected graph $\Gamma$ with $N$ vertices and examines a $\mathcal{L}_{\Gamma}(N,N-1)$ frame for $\mathbb{R}^{(N-1)}.$ The theorem establishes a condition under which this frame and its canonical dual pair, if tight,  is a two erasure spectrally optimal dual pair.

\begin{thm} \label{thm5point2}
	Let $\Gamma$ be a connected graph of $N$ vertices and $F= \{f_i\}_{i=1}^N$ is a $\mathcal{L}_{\Gamma}(N,N-1)$ frame for $\mathbb{R}^{(N-1)}.$ If $F$ is a  tight frame then $\left( F, S_{F}^{-1}F \right) \in 	\mathcal{E}_{2}^p .$
\end{thm}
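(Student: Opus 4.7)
The plan is to first pin down $\Gamma$ using tightness, and then to verify directly that the canonical dual pair is $2$-uniform so that Theorem~\ref{thm3point7} delivers the conclusion.

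The starting observation is that, since $F$ is a $\mathcal{L}_{\Gamma}(N,N-1)$-frame, the Gramian of $F$ equals $\mathcal{L}(\Gamma)$, whose nonzero eigenvalues coincide with the eigenvalues of $S_F$. Tightness with frame bound $A$ therefore forces $\mathcal{L}(\Gamma)$ to have exactly two eigenvalues: $0$ (simple, because $\Gamma$ is connected) and $A$ with multiplicity $N-1$. I expect this to be the main obstacle: I would handle it by applying Theorem~\ref{thm5point1}, which gives $A = \nu_1 \geq \eta + 1$ and $A = \nu_{N-1} \leq \tfrac{N}{N-1}\eta'$, where $\eta$ and $\eta'$ are the maximum and minimum degrees of $\Gamma$. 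Combining these two inequalities with $\eta' \leq \eta \leq N-1$ squeezes both degrees to be exactly $N-1$, so every vertex has degree $N-1$, hence $\Gamma = K_N$ and $A = N$.

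Once $\Gamma = K_N$ is established, the canonical dual is simply $g_i = S_F^{-1} f_i = \tfrac{1}{N} f_i$, and the Gramian entries are read off from $\mathcal{L}(K_N)$: namely $\|f_i\|^2 = N-1$ and $\langle f_j, f_i\rangle = -1$ for $i \neq j$. I would then verify two short identities. First, $\langle f_i, g_i\rangle = \tfrac{N-1}{N} = \tfrac{n}{N}$, so $(F, S_F^{-1}F)$ is $1$-uniform and by Proposition~\ref{prop3point1} lies in $\mathcal{E}_1^p$. Second, for $i \neq j$,
\begin{equation*}
\alpha_{ij}\alpha_{ji} = \tfrac{1}{N^2}\,\langle f_i, f_j\rangle\langle f_j, f_i\rangle = \tfrac{1}{N^2},
\end{equation*}
which with $n = N-1$ matches $\tfrac{nN - n^2}{N^2(N-1)}$ exactly, so the dual pair is in fact $2$-uniform.

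With $2$-uniformity in hand, the ``moreover'' part of Theorem~\ref{thm3point7} applies: $\delta_2^p = \tfrac{n}{N} + \sqrt{\tfrac{nN-n^2}{N^2(N-1)}}$, and by the same theorem $\mathcal{E}_2^p(F, S_F^{-1}F)$ attains this value. Therefore $(F, S_F^{-1}F) \in \mathcal{E}_2^p$, completing the plan.
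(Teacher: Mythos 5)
Your proposal is correct and follows essentially the same route as the paper: both use Theorem~\ref{thm5point1} to force $\Gamma$ to be regular of degree $N-1$ (hence complete), then verify $1$-uniformity and $2$-uniformity of $\left(F, S_F^{-1}F\right)$ and invoke Proposition~\ref{prop3point1} and Theorem~\ref{thm3point7}. Your version is in fact slightly more explicit than the paper's, since you compute $A=N$, $\alpha_{ij}\alpha_{ji}=\tfrac{1}{N^2}$ and check the match with $\tfrac{nN-n^2}{N^2(N-1)}$ directly.
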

\begin{proof}
	Let $F$ be a tight frame with bound $A.$ Then the corresponding frame operator is $S_F = A\,I_{(N-1) \times (N-1)},$ where $I_{(N-1) \times (N-1)}$  is  the ${(N-1) \times (N-1)}$ identity matrix. Since $\Gamma$ is connected, $\mathcal{L}(\Gamma)$ has $(N-1)$ non-zero eigenvalues. Recall that the Gramian matrix of $F$ is $\mathcal{G} = \mathcal{L}(\Gamma) = \mathcal{D}(\Gamma) - \mathcal{A}(\Gamma) .$ Since the non-zero eigenvalues of $\mathcal{G} $ are the same as the non-zero eigenvalues of the frame operator $S_F,$ then the eigenvalues of $\mathcal{G} $ are $A$ of multiplicity $(N-1) $ and $0$ with multiplicity $1.$ let $\eta$ and $\eta'$ are the largest and smallest vertex degree of $\Gamma.$ Then by Theorem\ref{thm5point1}, we have $A \geq \eta + 1$ and $A \leq \frac{N}{N-1}\eta'.$ Therefore, $\eta + 1 \leq \frac{N}{N-1}\eta'.$ Since $\Gamma$ is a simple graph, then $\eta \leq N-1.$ This in turn gives $\eta \leq \eta'.$ Thus  $\eta = \eta'.$ Note that $\frac{1}{A}\|f_i\|^2 = \langle f_i, S_{F}^{-1}f_i \rangle = \deg{g_i} .$ So, $\frac{1}{A}\|f_i\|^2  $ is a constant for all $1 \leq i \leq N.$   Using the fact that $N-1 = \sum\limits_{i=1}^N \frac{1}{A}\|f_i\|^2,$ we have $\frac{1}{A}\|f_i\|^2  = \frac{N-1}{N},\;1 \leq i \leq N.$ Then,
	$\mathcal{E}_{1}^p (F,  S_{F}^{-1}F) = \frac{1}{N} \left(\sum\limits_{i=1}^N \left|  \langle f_i, S_{F}^{-1}f_i \rangle \right|^p \right)^{\frac{1}{p}} = \frac{N-1}{N}.$ Hence, by Proposition \ref{prop3point1}, we have $\left( F, S_{F}^{-1}F \right) \in \mathcal{E}_{1}^p.$ Since $G$ is connected regular graph with $\deg(g_i) = N-1,$ then $G$ is complete and hence, $\mathcal{A}(\Gamma) = \begin{pmatrix}
		0 & 1  & \cdots &1 &1\\
		1 & 0  & \cdots &1 &1 \\
		\vdots &\vdots  &\vdots &\vdots &\vdots \\
		1 & 1  & \cdots &1 &  0 \\
		
	\end{pmatrix}.$  This leads to $\langle f_i, S_{F}^{-1}f_j \rangle \langle f_j, S_{F}^{-1}f_i \rangle$ is a constant for all $ i \neq j.$ Hence as in the proof of Theorem\ref{thm3point7}, $\left( F, S_{F}^{-1}F \right) \in \mathcal{E}_{2}^p.$
\end{proof}

The following theorem builds upon the results from Theorem \ref{thm5point2}, extending the analysis by incorporating the operator norm as a measure of error.

\begin{thm}
	Let $\Gamma$ be a connected graph of $N$ vertices and $F= \{f_i\}_{i=1}^N$ is a $\mathcal{L}_{\Gamma}(N,N-1)$ frame for $\mathbb{R}^{(N-1)}.$ If $F$ is a  tight frame then $\left( F, S_{F}^{-1}F \right) \in 	\mathcal{O}_{1}^p .$
\end{thm}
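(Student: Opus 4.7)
The plan is to reduce the claim to what was already established in the proof of Theorem \ref{thm5point2} together with the characterization of $\mathcal{O}_{1}^p$ in Proposition \ref{prop4point1}. In this setup $n = N-1$, so the target quantity is $\Delta_{1}^p = \frac{N-1}{N}$.

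First, I would exploit the tightness hypothesis: if $F$ is an $A$-tight frame, then $S_F = A\,I_{(N-1)\times(N-1)}$, and hence $S_{F}^{-1}f_i = \frac{1}{A}f_i$. This gives $\|S_{F}^{-1}f_i\| = \frac{1}{A}\|f_i\|$, so
\[
\|f_i\|\cdot\|S_{F}^{-1}f_i\| \;=\; \frac{1}{A}\|f_i\|^2 \;=\; \langle f_i, S_{F}^{-1}f_i\rangle.
\]
This reduces the operator-norm quantities appearing in \eqref{equation6} to exactly the inner products that were analyzed in Theorem \ref{thm5point2}.

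Next, I would invoke the key computation from the proof of Theorem \ref{thm5point2}, where it was shown that under the present hypotheses (tightness of the $\mathcal{L}_{\Gamma}(N,N-1)$-frame together with connectedness of $\Gamma$ and the degree-bound argument via Theorem \ref{thm5point1}), one has $\frac{1}{A}\|f_i\|^2 = \frac{N-1}{N}$ for every $1 \le i \le N$. Combining this with the previous step yields $\|f_i\|\cdot\|S_{F}^{-1}f_i\| = \frac{N-1}{N}$ for all $i$.

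Finally, plugging into \eqref{equation6}, I get
\[
\mathcal{O}_{1}^p(F, S_{F}^{-1}F) \;=\; \left\{\frac{1}{N}\sum_{i=1}^{N}\left(\tfrac{N-1}{N}\right)^p\right\}^{\frac{1}{p}} \;=\; \frac{N-1}{N} \;=\; \frac{n}{N},
\]
so by the characterization $\mathcal{O}_{1}^p = \{(F,G) : \frac{1}{N}\sum_i (\|f_i\|\,\|g_i\|)^p = (n/N)^p\}$ in Proposition \ref{prop4point1}, the pair $(F, S_{F}^{-1}F)$ belongs to $\mathcal{O}_{1}^p$. There is no real obstacle here — all the graph-theoretic work (using Theorem \ref{thm5point1} to force the regularity $\eta = \eta'$ and then deducing equal squared norms $\|f_i\|^2 = A\cdot\frac{N-1}{N}$) has already been done inside Theorem \ref{thm5point2}; this proof simply rephrases that conclusion in terms of operator norms rather than spectral radii.
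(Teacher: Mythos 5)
Your proof is correct and follows exactly the route the paper intends: the paper omits an explicit proof of this theorem, remarking only that it ``builds upon'' Theorem~\ref{thm5point2}, and your argument supplies precisely that missing step --- tightness gives $\|f_i\|\,\|S_F^{-1}f_i\| = \tfrac{1}{A}\|f_i\|^2 = \langle f_i, S_F^{-1}f_i\rangle = \tfrac{N-1}{N}$ by the regularity computation already carried out there, so $\mathcal{O}_1^p(F,S_F^{-1}F) = \tfrac{n}{N}$ and Proposition~\ref{prop4point1} applies.
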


It may be noted, however, that the tightness condition of FF is not necessary for $(F, S_{F}^{-1}F)$ to be optimal dual pair under spectral radius, as illustrated by the example below.

\begin{example}
	
	Let $\mathcal{H} = \mathbb{C}^2.$	Consider a connected graph $\Gamma$  :\\
	$$\begin{tikzpicture}
		\node[draw, circle, inner sep=2pt] (A) at (0,0) {2};
		\node[draw, circle, inner sep=2pt] (B) at (2,0) {3};
		\node[draw, circle, inner sep=2pt] (C) at (1,2) {1};
		
		\draw (A) -- (B);
		\draw (A) -- (C);
		
	\end{tikzpicture}$$
	
	The Laplacian matrix of $\Gamma$ is $\mathcal{L}(\Gamma) =  \begin{pmatrix}
		1 & -1  & 0\\
		-1 & 2  & -1 \\
		0 & -1   &  1 \\
		
	\end{pmatrix}.$
	
	Take a frame $F= \{f_i\}_{i=1}^3 = \left\{ \begin{pmatrix}	\frac{1}{\sqrt{2}}  \\ \frac{1}{\sqrt{2}} \end{pmatrix}, \begin{pmatrix}	0  \\ \sqrt{2} \end{pmatrix}, \begin{pmatrix}	-\frac{1}{\sqrt{2}}  \\ \frac{1}{\sqrt{2}} \end{pmatrix} \right\}.$ Note that $F$ is not a tight frame. It is easy to see that the Gramian matrix of $F$ is $\mathcal{G}_F = \mathcal{L}(\Gamma).$ So $F$ is a $\mathcal{L}_\Gamma(3,2)$ frame for $\mathcal{H}.$ The canonical dual of $F$ is $S_{F}^{-1}F = \left\{ \begin{pmatrix}	\frac{1}{\sqrt{2}}  \\ \frac{1}{3\sqrt{2}} \end{pmatrix}, \begin{pmatrix}	0  \\ -\frac{\sqrt{2}}{3} \end{pmatrix}, \begin{pmatrix}	-\frac{1}{\sqrt{2}}  \\ \frac{1}{3\sqrt{2}} \end{pmatrix} \right\}.$ \\
	Now, $\langle f_i, S_{F}^{-1}f_i \rangle = \frac{2}{3},\;1 \leq i \leq 3.$ Therefore, $(F, S_{F}^{-1}F)$ is a $1-$uniform $(3,2)$ dual pair and hence by Proposition\ref{prop3point1}, $(F, S_{F}^{-1}F) \in \mathcal{E}_{1}^p.$ Additionally, $\langle f_i, S_{F}^{-1}f_j \rangle \langle f_j, S_{F}^{-1}f_i \rangle  = \frac{1}{9},\;\forall i \neq j.$ Therefore, $(F, S_{F}^{-1}F)$ is a $2-$uniform  dual pair and hence by Theorem\ref{thm3point7}, $(F, S_{F}^{-1}F) \in \mathcal{E}_{2}^p.$ However, $\|f_1\|\,\|S_{F}^{-1}f_1\| = \frac{\sqrt{5}}{3}, \|f_2\|\,\|S_{F}^{-1}f_2\| = \frac{2}{3}$ and $ \|f_3\|\,\|S_{F}^{-1}f_3\| = \frac{\sqrt{5}}{3}.  $ Thus by Theorem\ref{thm4point2}, $(F, S_{F}^{-1}F) \notin \mathcal{O}_{1}^p.$
\end{example}

The next theorem explores the properties of tight frames generated by a graph with no null vertices. It highlights the optimality of such frames and its canonical dual pair under spectral radius and operator norm. 

\begin{thm}
	Let $\Gamma$ be a graph of $N$ vertices with no null vertex and $F= \{f_i\}_{i=1}^N$ is a $\Gamma(N,n)$ frame for $\mathbb{C}^{(n)}.$ If $F$ is a  tight frame then $\left( F, S_{F}^{-1}F \right) $ is both $1-$erasure optimal dual pair under spectral radius and operator norm. 
\end{thm}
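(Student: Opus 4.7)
The plan is to reduce both optimality assertions to verifying that $(F, S_F^{-1}F)$ is a $1$-uniform dual pair with $\|f_i\|\,\|S_F^{-1}f_i\| = n/N$ for every $i$. Once those two pointwise identities are in hand, Proposition \ref{prop3point1} immediately gives $(F, S_F^{-1}F)\in\mathcal{E}_1^p$ and Proposition \ref{prop4point1} gives $(F, S_F^{-1}F)\in\mathcal{O}_1^p$.

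First I would use that, for a $\Gamma(N,n)$-frame, the Gramian equals $\mathcal{L}(\Gamma)$, so $\|f_i\|^2$ is the $i$-th diagonal entry of $\mathcal{L}(\Gamma)$, namely the degree $d_i$. Tightness with bound $A$ forces $S_F=A\,I$, hence the nonzero spectrum of $\mathcal{L}(\Gamma)$ is exactly $A$ repeated $n$ times, so $\Gamma$ has precisely $N-n$ connected components $\Gamma_1,\ldots,\Gamma_{N-n}$; the "no null vertex" hypothesis rules out components of size $1$, so each $\Gamma_j$ has $N_j\geq 2$ vertices and Laplacian spectrum $\{A^{(N_j-1)}, 0\}$.

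Next, adapting the argument used inside the proof of Theorem \ref{thm5point2}, I would apply Theorem \ref{thm5point1} to each component $\Gamma_j$: with $\eta_j$ and $\eta_j'$ its largest and smallest degrees, $A\geq \eta_j+1$ and $A\leq \frac{N_j}{N_j-1}\eta_j'$. Combining these with $\eta_j'\leq \eta_j\leq N_j-1$ forces $\eta_j=\eta_j'=N_j-1$ and $A=N_j$, so $\Gamma_j$ is the complete graph $K_A$. Hence $\Gamma$ is the disjoint union of $k=N/A$ copies of $K_A$, is $(A-1)$-regular, and $n=k(A-1)$, giving the key identity
\[
\frac{\|f_i\|^2}{A}=\frac{A-1}{A}=\frac{n}{N}\quad\text{for every }i.
\]
I expect this structural extraction --- deducing that $\Gamma$ must be a disjoint union of equally sized complete graphs from the spectral condition alone --- to be the main obstacle; the remainder is bookkeeping.

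Finally, since $S_F^{-1}=A^{-1}I$, one has $\langle f_i,S_F^{-1}f_i\rangle=\|f_i\|^2/A=n/N$ for all $i$, so $(F,S_F^{-1}F)$ is $1$-uniform and Proposition \ref{prop3point1} yields $(F,S_F^{-1}F)\in\mathcal{E}_1^p$. For the operator norm, $\|S_F^{-1}f_i\|=\|f_i\|/A$ gives $\|f_i\|\,\|S_F^{-1}f_i\|=\|f_i\|^2/A=n/N$, so
\[
\mathcal{O}_1^p(F,S_F^{-1}F)=\left\{\frac{1}{N}\sum_{i=1}^N\Big(\tfrac{n}{N}\Big)^{p}\right\}^{1/p}=\frac{n}{N}=\Delta_1^p,
\]
and Proposition \ref{prop4point1} yields $(F,S_F^{-1}F)\in\mathcal{O}_1^p$, completing the proof.
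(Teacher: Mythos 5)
Your proposal is correct, and its overall skeleton coincides with the paper's: tightness forces $S_F=A\,I$, hence the Laplacian spectrum is $\{A^{(n)},0^{(N-n)}\}$, the graph splits into $N-n$ components each of which must be regular, so $\|f_i\|^2=\langle f_i,S_F^{-1}f_i\rangle\cdot A$ is constant and equal to $An/N$, after which Propositions \ref{prop3point1} and \ref{prop4point1} finish both claims. The one genuine difference is how the common degree is extracted. The paper applies Theorem \ref{thm5point1} to each component only to get regularity (with a priori different degrees $r_i$), and then invokes the Gramian identity $\mathcal{G}^2=\Theta_F S_F\Theta_F^*=A\mathcal{G}$, comparing diagonal entries of $\bigl(r_iI-\mathcal{A}(\Gamma_i)\bigr)^2=A\bigl(r_iI-\mathcal{A}(\Gamma_i)\bigr)$ to conclude $r_i=A-1$ for every $i$. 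You instead squeeze harder on the two inequalities of Theorem \ref{thm5point1} within each component: $\eta_j+1\leq A\leq \frac{N_j}{N_j-1}\eta_j'$ together with $\eta_j'\leq\eta_j\leq N_j-1$ forces $\eta_j=\eta_j'=N_j-1$ and $A=N_j$, so every component is a complete graph $K_A$. Your route avoids the quadratic Gramian identity entirely and yields the stronger structural conclusion that $\Gamma$ is a disjoint union of copies of $K_A$ (the paper only records $(A-1)$-regularity), at the cost of nothing; the paper's route is the one that generalizes more readily when the degree bounds of Theorem \ref{thm5point1} are not sharp enough on their own. Your use of the no-null-vertex hypothesis to guarantee $N_j\geq 2$ (so that each component actually carries the eigenvalue $A$ and Theorem \ref{thm5point1} applies nontrivially) is exactly the right place to spend it.
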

\begin{proof}
	Let $F$ be a tight frame with bound $A$ and $\Gamma_1, \Gamma_2,\ldots, \Gamma_{(N-n)}$ are the connected component of $\Gamma.$ Let the number of vertices in $\Gamma_i$ is $n_i.$ The Gramian matrix of $F$ is $$\mathcal{G} = \mathcal{L}(\Gamma) = \begin{pmatrix}
		\mathcal{L}(\Gamma_1) & 0  & \cdots &0 &0\\
		0 & \mathcal{L}(\Gamma_2)  & \cdots &0 &0 \\
		\vdots &\vdots  &\vdots &\vdots &\vdots \\
		0 & 0  & \cdots &0 &  \mathcal{L}(\Gamma_{(N-n)}) \\
		
	\end{pmatrix}.$$ 
As the frame operator of corresponding to $F$ is $S_F = AI_{n\times n},$ the eigenvalues of $\mathcal{L}(\Gamma)$ are $A$ with multiplicity $n$ and $0$ with multiplicity $(N-n).$ As each $\Gamma_i$ is connected, $\mathcal{L}(\Gamma_i)$ has rank $(n_i -1)$ and so $L(G_i) $ has eigenvalues $A$ with multiplicity $(n_i -1)$ and  $0$ with multiplicity 1. Now using the similar argument as in the proof of Theorem  \ref{thm5point2}, we can prove that each $\Gamma_i$ is regular. Let each $\Gamma_i$ is $r_i -$regular. Obviously $r_i > 0,\,$ for all $1 \leq i \leq N-n.$ Now, the adjacency matrix of $G$ is $$\mathcal{A}(\Gamma) =  \begin{pmatrix}
	\mathcal{A}(\Gamma_1) & 0  & \cdots &0 &0\\
	0 & \mathcal{A}(\Gamma_2)  & \cdots &0 &0 \\
	\vdots &\vdots  &\vdots &\vdots &\vdots \\
	0 & 0  & \cdots &0 &  \mathcal{A}(\Gamma_{(N-n)})\end{pmatrix} .$$ 
	Therefore, 
	$$ \mathcal{G} = \mathcal{L}(\Gamma) = \mathcal{D}(\Gamma) - \mathcal{A}(\Gamma) = \begin{pmatrix}
		r_{1}I - \mathcal{A}(\Gamma_1) & 0  & \cdots &0 &0\\
		0 & r_{2}I - \mathcal{A}(\Gamma_2)  & \cdots &0 &0 \\
		\vdots &\vdots  &\vdots &\vdots &\vdots \\
		0 & 0  & \cdots &0 &  r_{N-n}I - \mathcal{A}(\Gamma_{(N-n)})\end{pmatrix} .$$
	
	As $S_F = AI_{n\times n} = \Theta_{F}^* \Theta_F,$ we then have, $\mathcal{G}^2 = \Theta_F \Theta_{F}^{*} \Theta_F \Theta_{F}^{*} = \Theta_F S_F \Theta_{F}^* = A\mathcal{G}.$ This gives, $\left( r_{i} I -\mathcal{A}(\Gamma_i)    \right)^2 = A\left( r_{i} I -\mathcal{A}(\Gamma_i)    \right),\;1 \leq i \leq N-n.$ After simplifying we get 
	\begin{align}\label{eqn5point7}
		\left(\mathcal{A}(\Gamma_i)\right)^2 + (A- 2r_i)\mathcal{A}(\Gamma_i) + (r_{i}^2 - 2r_i)I) =0 , \;\;\forall i.
		\end{align} 
	 It is easy to see that each diagonal entries of each $\left(\mathcal{A}(\Gamma_i)\right)^2$ is $r_i$ and $\mathcal{A}(\Gamma_i)$ is $0$ and thus comparing the  diagonal entries on both side of \eqref{eqn5point7}, we get $r_{i} + r_{i}^2 - Ar_{i} =0,\; \forall i. $ Thus, $r_i = A -1,\; 1\leq i \leq N-n$ and hence $\Gamma$ is a $(A-1)-$regular graph. Hence, all the diagonal entries of $\mathcal{G}$ are same. This in turn gives $\|f_i\|^2$ is a constant for all $i.$ Using the fact that $n =\sum\limits_{i=1}^N \langle f_i,S_{F}^{-1}f_i \rangle = \sum\limits_{i=1}^N \frac{1}{A}\|f_i\|^2,$ we have $\|f_i\|^2 = \frac{An}{N},\; \forall i.$ Thus by \eqref{equation2}and \eqref{equation6}, we have $\mathcal{E}_{1}^p (F, S_{F}^{-1}F) = \frac{n}{N}= \mathcal{O}_{1}^p (F, S_{F}^{-1}F). $ Therefore, by Proposition\ref{prop3point1} and \ref{prop4point1} the result follows.
	
\end{proof}

Let $\Gamma$ be a graph with $n$ vertices. Let $F^{1} = \{f^{1}_i\}_{i=1}^N$ and  $F^{2} = \{f^{2}_i\}_{i=1}^N$ be two $\Gamma(N,n)$ frame for $\mathcal{H}.$  Using theorem 3.6 in \cite{deep1}, it can be easily calculate that $S_{F^{2}} = U S_{F^{2}} U^{*},$ where $F^{2} = U F^{1},$ for some unitary operator $U.$ By looking at Theorem \ref{thm3point7} and \ref{thm4point4unitary}, we can conclude the following proposition.

\begin{prop}
	 $F^{1} = \{f^{1}_i\}_{i=1}^N$ and  $F^{2} = \{f^{2}_i\}_{i=1}^N$ be two $\Gamma(N,n)$ frame for $\mathcal{H}.$ Then the following holds:
	 	\begin{enumerate}
	 	\item [{\em (i)}] $\left(F^1 , S_{F^1}^{-1}F^{1}  \right) \in \mathcal{E}_{i}^p $ if and only if $\left(F^2 , S_{F^2}^{-1}F^{2}  \right) \in \mathcal{E}_{i}^p,\;$ for $i=1,2.$
	 	
	 	\item [{\em (ii)}]  $\left(F^1 , S_{F^1}^{-1}F^{1}  \right) \in \mathcal{O}_{1}^p $ if and only if $\left(F^2 , S_{F^2}^{-1}F^{2}  \right) \in \mathcal{O}_{1}^p.$
	 \end{enumerate}
\end{prop}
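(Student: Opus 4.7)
The plan is to reduce the proposition directly to the unitary-invariance theorems already proved, namely Theorem \ref{thm3point7} for the spectral-radius class and Theorem \ref{thm4point4unitary} for the operator-norm class, together with its obvious analogue for $\mathcal{E}_2^p$.

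First, I would invoke the fact cited in the paragraph preceding the proposition: since $F^1$ and $F^2$ are both $\Gamma(N,n)$ frames for $\mathcal{H}$, Theorem~3.6 of \cite{deep1} yields a unitary $U$ on $\mathcal{H}$ with $F^2 = U F^1$, and consequently $S_{F^2} = U S_{F^1} U^*$. Taking inverses, $S_{F^2}^{-1} = U S_{F^1}^{-1} U^*$, and therefore
\begin{align*}
S_{F^2}^{-1} f_i^2 \;=\; U S_{F^1}^{-1} U^* \, U f_i^1 \;=\; U\bigl(S_{F^1}^{-1} f_i^1\bigr), \qquad 1\leq i\leq N.
\end{align*}
Hence the dual pair $\bigl(F^2, S_{F^2}^{-1}F^2\bigr)$ is precisely $\bigl(UF^1,\, U S_{F^1}^{-1}F^1\bigr)$, the unitary image of $\bigl(F^1, S_{F^1}^{-1}F^1\bigr)$.

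Second, I would apply the unitary-invariance results. For part~(ii), Theorem~\ref{thm4point4unitary} immediately gives $\bigl(F^1, S_{F^1}^{-1}F^1\bigr)\in \mathcal{O}_1^p$ if and only if $\bigl(F^2, S_{F^2}^{-1}F^2\bigr)\in \mathcal{O}_1^p$. For part~(i) with $i=1$, Theorem~\ref{thm3point7} supplies the same equivalence for $\mathcal{E}_1^p$. The one step that is not literally stated as a previous theorem is the analogous invariance for $\mathcal{E}_2^p$; I would dispatch this in one line by observing that in formula~\eqref{eqn3point3},
\begin{align*}
\alpha_{ij}\alpha_{ji} \;=\; \langle g_i,f_j\rangle\langle g_j,f_i\rangle \;=\; \langle Ug_i, Uf_j\rangle\langle Ug_j, Uf_i\rangle,
\end{align*}
so $\mathcal{E}_2^p(F,G) = \mathcal{E}_2^p(UF,UG)$ for any unitary $U$, whence membership in $\mathcal{E}_2^p$ (which is defined relative to $\mathcal{E}_1^p$) is likewise preserved.

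There is essentially no obstacle here: the content of the proposition is that unitarily equivalent frames have canonical-dual pairs that agree on the optimality conditions, and the two prior invariance theorems together with the elementary $\mathcal{E}_2^p$ observation do the job. The only care needed is to make sure one matches the canonical duals correctly under $U$, which is the identity $S_{F^2}^{-1}f_i^2 = U S_{F^1}^{-1}f_i^1$ verified above.
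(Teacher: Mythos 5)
Your proposal is correct and follows essentially the same route as the paper: the paper likewise cites Theorem~3.6 of \cite{deep1} to get $F^2 = UF^1$ with $S_{F^2} = U S_{F^1} U^*$ and then appeals to the unitary-invariance results (Theorems~\ref{thm3point7} and~\ref{thm4point4unitary}). Your explicit verification that $S_{F^2}^{-1}f_i^2 = U S_{F^1}^{-1}f_i^1$ and your one-line check that $\mathcal{E}_2^p$ is also unitarily invariant are welcome details that the paper leaves implicit.
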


The following lemma addresses the structure of dual frames for a specific class of frames associated with a connected graph. It provides a detailed description of how the dual frames can be characterized, particularly when $N>n.$ If $N=n,$ a frame has only its canonical dual. This result is important for understanding the flexibility and structure of dual frames in the context of graph-generated frames.

\begin{lem}
	Suppose $\Gamma$ is a connected graph with $N$ vertices and $F= \{f_i\}_{i=1}^N$ be a $\Gamma(N, N-1)$ frame for $\mathbb{C}^{N-1}$ with frame operator $S_F.$ Then the family of dual frames of $F$ is of the form $\left\{\left\{ S_{F}^{-1}f_i + \frac{c_i}{c_1}h  \right\}_{i=1}^N : h \in \mathbb{C}^{N-1}\right\},$ when $ \{f_i\}_{i=1}^N$ satisfy $\sum\limits_{i=1}^N c_i f_i = 0,\;$ with $c_1 \neq 0.$
\end{lem}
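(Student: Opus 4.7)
The strategy is to invoke the general parametrization of dual frames recalled in Section~2, namely $g_i = S_F^{-1}f_i + u_i$, where the perturbations $\{u_i\}_{i=1}^N \subset \mathbb{C}^{N-1}$ must satisfy $\sum_{i=1}^N \langle f, f_i\rangle u_i = 0$ for every $f\in\mathbb{C}^{N-1}$, and then to classify the admissible sequences $\{u_i\}_{i=1}^N$ using the graph hypothesis.

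First, I would recast the compatibility condition operator-theoretically. Defining $\Theta_U:\mathbb{C}^{N-1}\to\mathbb{C}^N$ by $\Theta_U(f) = (\langle f, u_i\rangle)_{i=1}^N$, the condition becomes $\Theta_F^*\Theta_U = 0$, equivalently $\operatorname{range}(\Theta_U)\subseteq \ker(\Theta_F^*)$. Next, I would identify $\ker(\Theta_F^*)$ via connectedness: since $\Gamma$ is connected on $N$ vertices, the Gramian $\mathcal{G}_F = \mathcal{L}(\Gamma)$ has rank exactly $N-1$, so $\Theta_F^*$ has a one-dimensional kernel. The hypothesis $\sum c_i f_i = 0$ (i.e.\ $\Theta_F^* c = 0$) together with $c_1\neq 0$ then pins down $\operatorname{span}(c)$ as this kernel.

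The third step extracts the shape of the $u_i$. Since $\operatorname{range}(\Theta_U)\subseteq \operatorname{span}(c)$, for every $f$ there is a scalar $\mu(f)$ with $\Theta_U(f) = \mu(f)\,c$, and linearity of $\Theta_U$ forces $\mu$ to be a linear functional on $\mathbb{C}^{N-1}$, so by Riesz representation $\mu(f) = \langle f, h_0\rangle$ for a unique $h_0\in\mathbb{C}^{N-1}$. Reading the $i$-th coordinate gives $\langle f, u_i\rangle = c_i\langle f, h_0\rangle$ for all $f$, which in turn forces $u_i$ to be a scalar multiple of $h_0$ proportional to $c_i$ (in the graph-generated setting the $c_i$ may be taken real, e.g.\ $c = (1,1,\ldots,1)\in\ker\mathcal{L}(\Gamma)$, so no conjugation ambiguity arises). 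Absorbing the factor $1/c_1$ into a redefined parameter $h\in\mathbb{C}^{N-1}$ yields $u_i = (c_i/c_1)h$.

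Finally, I would verify the converse by direct substitution: for any $h\in\mathbb{C}^{N-1}$, the candidate perturbation $u_i = (c_i/c_1)h$ satisfies $\sum_i \langle f, f_i\rangle u_i = c_1^{-1}\langle f, h\rangle\sum_i c_i f_i = 0$, so $\{S_F^{-1}f_i + (c_i/c_1)h\}_{i=1}^N$ is indeed a dual frame of $F$, completing the characterization. The main substantive step is the rank calculation that renders $\ker(\Theta_F^*)$ exactly one-dimensional: connectedness of $\Gamma$ is precisely what ensures the dual frame family is parametrized by a single vector $h$ rather than a higher-dimensional object, and it is also what makes the vector $c$ (up to a scalar) uniquely distinguished.
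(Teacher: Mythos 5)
Your proof is correct and follows essentially the same route as the paper's: both reduce the dual-frame condition to membership of the coefficient vector $\left(\langle f,u_i\rangle\right)_{i=1}^N$ in $\ker\Theta_F^{*}=\ker\mathcal{L}(\Gamma)$, use connectedness of $\Gamma$ to make that kernel the one-dimensional span of $(c_1,\ldots,c_N)^{t}$, and then extract $u_i=\frac{c_i}{c_1}h$ from the resulting proportionality (the paper, like you, passes from the ratio $\bar{c_i}/\bar{c_1}$ to $c_i/c_1$ using that these ratios are real in the graph-generated setting). Your explicit verification of the converse inclusion is a small addition that the paper leaves implicit.
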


\begin{proof}
	We have $\Theta_F^{*}e_i = f_i,\,1 \leq i \leq N$ and the Gramian matrix $\mathcal{G} = \mathcal{L}(\Gamma) = \Theta_F \Theta_F^{*},$ where $\mathcal{L}(\Gamma)$ is the Laplacian matrix of the graph $\Gamma.$ It is well-known that any dual of $F$ is of the form $\left\{ S_{F}^{-1}f_i + h_i  \right\}_{i=1}^N, $ where $\sum\limits_{i=1}^N \langle f, f_i \rangle h_i = 0.$   Thus we have $\Theta_F^{*}\tilde{f} = 0,$ for any $f \in \mathcal{H},$ where $\tilde{f} = [\langle f,h_1 \rangle, \cdots , \langle f,h_n \rangle ]^{t}$ and $[\langle f,h_1 \rangle, \cdots , \langle f,h_n \rangle ]^{t}$ denotes the transpose of $[\langle f,h_1 \rangle, \cdots , \langle f,h_n \rangle ].$ This gives $\mathcal{L}(\Gamma) \tilde{f} = 0.$ Since $\Gamma$ is a connected graph, thus rank$\left(\mathcal{L}(\Gamma)\right) = N-1 $ and hence $\dim\left(N(\Theta_F) \right) = 1,$ where $N(\Theta_F)$ denote the null space of $\Theta_F.$ Note that $L\left( [c_1,c_2, \cdots, c_n]^{t}  \right) = 0.$ Therefore, $N(\Theta_F) = \left\{\alpha [c_1, c_2, \cdots, c_n]^{t} : \alpha \in \mathbb{C}\right\}.$ Thus, $\tilde{f} = [\langle f,h_1 \rangle, \cdots , \langle f,h_n \rangle ]^{t} = [\alpha'c_1, \alpha'c_2, \cdots, \alpha'c_n]^{t},$ for some $\alpha' \in \mathbb{C}.$  This gives, $\langle f,h_i \rangle = \alpha'c_i,\,1 \leq i \leq N.$ So, $\frac{\langle f,h_i \rangle}{\langle f,h_1 \rangle} = \frac{c_i}{c_1},$ for all $i$ and  for all $f \in \mathcal{H}.$ This in turn leads to $\langle f, \bar{c_1}h_i - \bar{c_i}h_1 \rangle = 0,\;1 \leq i \leq N, \forall f \in \mathcal{H}.$ This gives $\bar{c_1}h_i - \bar{c_i}h_1 = 0$ and so $h_i = \frac{\bar{c_i}}{\bar{c_1}}h_1,\;1 \leq i \leq N.$ Therefore, any dual is of the form  $\left\{\left\{ S_{F}^{-1}f_i + \frac{c_i}{c_1}h  \right\}_{i=1}^N : h \in \mathbb{C}^{N-1}\right\}.$
\end{proof}

A frame $F= \{f_i\}_{i=1}^N$ is called $k-$independent if any $k$ vectors are linearly independent \cite{sali}. The following lemma provides the expression for $\mathcal{E}_{1}^p\left( F, S_{F}^{-1}F \right),$ when $F$ is $(N-1)-$independent.
\begin{lem}\label{lemma5point6}
	let $\Gamma$ be a connected graph with $N$ vertices. If $F= \{f_i\}_{i=1}^N$ be an $(N-1)-$independent $\mathcal{L}_{\Gamma}(N,N-1)$ frame for $\mathbb{C}^{N-1}.$ Then,   $\mathcal{E}_{1}^p\left( F, S_{F}^{-1}F \right) = \left\{\dfrac{1}{N} \sum\limits_{i=1}^N \left(1- \dfrac{c_{i}^2}{c_{1}^2 + c_{2}^2 + \cdots + c_{N}^2}\right)^p\right\}^{\frac{1}{p}} .$  
\end{lem}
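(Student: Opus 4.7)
The plan is to express each $\langle f_i, S_F^{-1}f_i\rangle$ as a diagonal entry of the orthogonal projection onto the range of the analysis operator, and then to identify that projection explicitly from the unique linear dependence among the $f_i$'s. By equation~\eqref{equation2} applied to $G = S_F^{-1}F$, we have $\mathcal{E}_1^p(F, S_F^{-1}F) = \{\tfrac{1}{N}\sum_{i=1}^N |\langle f_i, S_F^{-1}f_i\rangle|^p\}^{1/p}$, so the whole task reduces to evaluating each $\langle f_i, S_F^{-1}f_i\rangle$ and showing it equals $1 - c_i^2/(c_1^2 + \cdots + c_N^2)$.

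First I would rewrite, using $\Theta_F^* e_i = f_i$ and $S_F = \Theta_F^*\Theta_F$,
\[\langle f_i, S_F^{-1}f_i\rangle = \langle e_i,\, \Theta_F(\Theta_F^*\Theta_F)^{-1}\Theta_F^*\, e_i\rangle = P_{ii},\]
where $P := \Theta_F S_F^{-1}\Theta_F^*$ is the orthogonal projection of $\mathbb{C}^N$ onto $\operatorname{range}\Theta_F$. Consequently $(I-P)_{ii} = 1 - \langle f_i, S_F^{-1}f_i\rangle$ for every $i$, and it suffices to compute the diagonal of the complementary rank-one projection $I - P$.

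Next I would pin down $(\operatorname{range}\Theta_F)^\perp = \ker \Theta_F^*$. Since $F$ spans $\mathbb{C}^{N-1}$, $\dim\ker\Theta_F^* = 1$; and the relation $\sum_i c_i f_i = 0$ says precisely that $c := (c_1, \ldots, c_N)^t$ lies in $\ker\Theta_F^*$. The $(N-1)$-independence of $F$ forces every $c_i$ to be nonzero, because a vanishing $c_j$ would leave the remaining $N-1$ vectors linearly dependent. Thus $\ker\Theta_F^* = \operatorname{span}\{c\}$, and (working with real coefficients, as is natural since $c$ may be chosen in the real null space of the Laplacian) the rank-one projection onto this line is $I - P = \tfrac{cc^t}{c_1^2 + \cdots + c_N^2}$; reading off diagonals gives $(I-P)_{ii} = c_i^2/\sum_j c_j^2$.

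Plugging this into the identity from the first step yields $\langle f_i, S_F^{-1}f_i\rangle = 1 - c_i^2/\sum_j c_j^2$, which is non-negative since $P$ is positive semidefinite, so the absolute values in \eqref{equation2} drop out and the stated expression follows. The main obstacle is the clean identification of $\ker\Theta_F^*$: one must simultaneously use that $F$ is a frame for $\mathbb{C}^{N-1}$ (to force the kernel to be one-dimensional, via $\ker\Theta_F^* = \ker(\Theta_F\Theta_F^*)$) and that $F$ is $(N-1)$-independent (to guarantee no $c_i$ vanishes), so that the rank-one formula $(cc^t)/\|c\|^2$ genuinely captures $I - P$ with all diagonal entries well-defined and strictly less than $1$.
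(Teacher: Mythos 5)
Your proof is correct and follows essentially the same route as the paper: both arguments reduce $1-\langle f_i,S_F^{-1}f_i\rangle$ to the $i$-th diagonal entry of the rank-one projection onto the one-dimensional null space of the analysis/Gramian operator, identify that null space as $\operatorname{span}\{c\}$ using the frame property and the $(N-1)$-independence (which forces every $c_i\neq 0$), and read off $c_i^2/\sum_j c_j^2$. The only cosmetic difference is that you package the computation via the projection $P=\Theta_F S_F^{-1}\Theta_F^*$, whereas the paper works through the explicit eigendecomposition $\mathcal{L}(\Gamma)=MDM^*$ and the norm $\|M_1^*e_i\|^2$; your version is slightly more self-contained since it does not invoke the cited formula for $S_F$.
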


\begin{proof}
	As  $ \{f_i\}_{i=1}^N$ is not a basis, then  $ \{f_i\}_{i=1}^N$ satisfy the relation $\sum\limits_{i=1}^N c_i f_i =0,\,c_i \in \mathbb{C}$ and for some $c_i \neq 0.$ We claim that $c_i \neq 0,\;\forall i.$ Suppose not. Then for some $j,\;c_j =0.$ This gives, $\sum\limits_{i \neq j} c_i f_i =0.$ Using the face that $F$ is  $(N-1)-$independent we have $c_i =0, 1\leq i \leq N,$ which arise a contradiction.\\ 
	Let $\mathcal{L}(\Gamma)$ be the Laplacian matrix of $\Gamma$ with eigenvalues $\lambda_1, \lambda_2, \cdots, \lambda_{N-1}, 0.$ Since, $\Gamma$ is a connected graph then $\dim{(\mathcal{L}(\Gamma))} = 1.$ It is easy to see that $\mathcal{L}(\Gamma) \begin{pmatrix} c_1\\c_2\\ \vdots \\c_N \end{pmatrix} = \Theta_F \Theta_F^{*} \begin{pmatrix} c_1\\c_2\\ \vdots \\c_N \end{pmatrix}  = \Theta_F(\sum\limits_{i=1}^N c_i f_i ) = 0.$ Hence, $ \begin{pmatrix} c_1\\c_2\\ \vdots \\c_N \end{pmatrix} $ is an eigenvector corresponding to the eigenvalue $0.$ Now $\mathcal{L}(\Gamma)$ can be written as $\mathcal{L}(\Gamma) = MDM^*,$ where $M$ is the matrix containing the eigenvectors column wise and  $D = \text{diag}(\lambda_1,\lambda_2, \ldots, \lambda_{N-1},0).$ Hence, the last column of $M$ is  $ \begin{pmatrix} \frac{c_1}{\sqrt{c_{1}^2 + c_{2}^2 + \cdots + c_{N}^2}}\\\frac{c_2}{\sqrt{c_{1}^2 + c_{2}^2 + \cdots + c_{N}^2}}\\ \vdots \\\frac{c_N}{\sqrt{c_{1}^2 + c_{2}^2 + \cdots + c_{N}^2}} \end{pmatrix} .$ By Lemma3.9 in \cite{deep1},  $S_F = \text{diag}(\lambda_1, \lambda_2, \cdots, \lambda_{N-1}).$ So, $S_{F}^{\frac{1}{2}} = \text{diag}(\sqrt{\lambda_1}, \sqrt{\lambda_2}, \cdots, \sqrt{\lambda_{N-1}})= D_1$ (say). Taking $\xi_{j}$ to be the $j^{th}$ column of $M$ and $\eta = \begin{pmatrix} \frac{c_1}{\sqrt{c_{1}^2 + c_{2}^2 + \cdots + c_{N}^2}}\\\frac{c_2}{\sqrt{c_{1}^2 + c_{2}^2 + \cdots + c_{N}^2}}\\ \vdots \\\frac{c_N}{\sqrt{c_{1}^2 + c_{2}^2 + \cdots + c_{N}^2}} \end{pmatrix}, $  we have
	\begin{align*}
		\langle  S_{F}^{-1}f_i, f_i \rangle = \left\|  S_{F}^{-\frac{1}{2}}f_i    \right\|^2 = \left\| D_{1}^{-1} D_{1} M_{1}^{*}(e_i)  \right\|^2 = \left\| M_{1}^{*}(e_i)  \right\|^2 &= \sum\limits_{j=1}^{N-1}|\langle \xi_j, e_i \rangle |^2 + |\langle \eta, e_i \rangle |^2 -  |\langle \eta, e_i \rangle |^2 \\&= \|e_i\|^2 -  |\langle \eta, e_i \rangle |^2  = 1- \frac{c_{i}^2}{c_{1}^2 + c_{2}^2 + \cdots + c_{N}^2}, 
	\end{align*}
	for all $ 1 \leq i \leq N$ and 	$M_1$ is a sub-matrix of $M$ formed by the first $N-1$ columns. Therefore by \eqref{equation2},  $\mathcal{E}_{1}^p\left( F, S_{F}^{-1}F \right) = \left\{\dfrac{1}{N} \sum\limits_{i=1}^N \left(1- \dfrac{c_{i}^2}{c_{1}^2 + c_{2}^2 + \cdots + c_{N}^2}\right)^p\right\}^{\frac{1}{p}} .$ \\
	 
\end{proof}

The following theorem delves into the properties of frames generated by the Laplacian matrix of a connected graph. It specifically addresses the uniqueness of dual frames under certain conditions, emphasizing the exclusivity of a particular dual frame pair when the frame vectors sum to zero. This result is crucial for understanding the uniqueness and optimality of such frames.

\begin{thm}
	let $\Gamma$ be a connected graph with $N$ vertices. If $F= \{f_i\}_{i=1}^N$ be an $\mathcal{L}_{\Gamma}(N,N-1)$ frame for $\mathbb{C}^{N-1}$ such that $\sum\limits_{i=1}^N f_i = 0.$ Then,   $\left( F, S_{F}^{-1}F \right) \in \mathcal{E}_{1}^p.$ Further, $F$ has no other dual $G'$ such that $(F,G')  \in \mathcal{E}_{1}^p.$
\end{thm}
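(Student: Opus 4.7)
The plan is to reduce everything to the previous lemma describing all duals of $F$ and to Lemma~\ref{lemma5point6}, exploiting the fact that the hypothesis $\sum_i f_i = 0$ pins down the null-space of $\Theta_F$ completely.

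\textbf{Step 1: $F$ is $(N-1)$-independent.} Since $\Gamma$ is connected, $\mathcal{L}(\Gamma) = \Theta_F\Theta_F^*$ has rank $N-1$, so the null space of $\Theta_F^*$ (equivalently, the space of linear relations among the $f_i$) is one-dimensional. The relation $\sum_i f_i = 0$ gives the coefficient vector $(1,1,\ldots,1)$, so every relation is a scalar multiple of this one. In particular no $(N-1)$-subset of $F$ can be linearly dependent: any such dependence would give a relation $(d_1,\ldots,d_N)$ with some $d_j=0$, which could not be a scalar multiple of $(1,\ldots,1)$. Hence $F$ is $(N-1)$-independent.

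\textbf{Step 2: $(F,S_F^{-1}F) \in \mathcal{E}_1^p$.} Apply Lemma~\ref{lemma5point6} with $c_1=c_2=\cdots=c_N=1$. The expression collapses to
\[
\mathcal{E}_1^p(F,S_F^{-1}F)=\Bigl\{\tfrac{1}{N}\sum_{i=1}^N\bigl(1-\tfrac{1}{N}\bigr)^p\Bigr\}^{1/p}=\tfrac{N-1}{N}=\tfrac{n}{N}=\delta_1^p,
\]
where the last equality uses Proposition~\ref{prop3point1} and $n=N-1$. So $(F,S_F^{-1}F)$ attains $\delta_1^p$.

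\textbf{Step 3: Uniqueness.} By the parametrisation lemma preceding Lemma~\ref{lemma5point6}, every dual $G'$ of $F$ has the form $\{S_F^{-1}f_i + \tfrac{c_i}{c_1}h\}_{i=1}^N$ for some $h\in\mathbb{C}^{N-1}$; with $c_i=1$ throughout this becomes $g'_i = S_F^{-1}f_i + h$. If $(F,G')\in\mathcal{E}_1^p$, Proposition~\ref{prop3point1} forces the pair to be $1$-uniform, so $\langle f_i,g'_i\rangle = \tfrac{n}{N} = \tfrac{N-1}{N}$ for every $i$. From the computation in the proof of Lemma~\ref{lemma5point6} we already have $\langle f_i,S_F^{-1}f_i\rangle = \tfrac{N-1}{N}$, and subtracting yields $\langle f_i,h\rangle = 0$ for all $i$. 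Because $F$ is a frame for $\mathbb{C}^{N-1}$, its vectors span the whole space, which forces $h=0$ and hence $G' = S_F^{-1}F$.

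The main obstacle will be Step~1: the assertion that $\sum f_i = 0$ automatically upgrades to $(N-1)$-independence depends crucially on the connectedness of $\Gamma$ (to make $N(\Theta_F^*)$ one-dimensional) and on none of the coefficients being zero (automatic here since they are all $1$). Once this is in hand, Steps 2 and 3 are essentially bookkeeping built on Lemma~\ref{lemma5point6}, Proposition~\ref{prop3point1}, and the explicit parametrisation of duals.
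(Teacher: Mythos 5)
Your proof is correct and follows essentially the same route as the paper: apply Lemma~\ref{lemma5point6} with $c_i=1$ to obtain $\mathcal{E}_{1}^p\left(F,S_{F}^{-1}F\right)=\frac{N-1}{N}=\delta_{1}^p$, then combine the parametrisation of duals $\{S_{F}^{-1}f_i+h\}_{i=1}^N$ with the $1$-uniformity criterion of Proposition~\ref{prop3point1} to force $\langle f_i,h\rangle=0$ for all $i$ and hence $h=0$. Your Step~1, which verifies that $\sum_i f_i=0$ together with connectedness yields $(N-1)$-independence so that Lemma~\ref{lemma5point6} applies as stated, is a hypothesis check the paper leaves implicit and is a worthwhile addition.
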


\begin{proof}
	Let $\mathcal{L}(\Gamma)$ be the Laplacian matrix of $\Gamma$ with eigenvalues $\lambda_1, \lambda_2, \cdots, \lambda_{N-1}, 0.$ Since, $\Gamma$ is a connected graph then $\dim{(\mathcal{L}(\Gamma))} = 1.$ Taking $c_i =1$ as in Lemma\ref{lemma5point6}, we have $\mathcal{E}_{1}^p\left( F, S_{F}^{-1}F \right) = \frac{N-1}{N} $ and hence by Proposition\ref{prop3point1}, $\left( F, S_{F}^{-1}F \right) \in \mathcal{E}_{1}^p.$ \\
	Suppose $\tilde{G} = \{g_i\}_{i=1}^N =  \{S_{F}^{-1}f_i + h\}_{i=1}^N $ be a dual of $F$ such that $\mathcal{E}_{1}^p(F, \tilde{G}) = \frac{N-1}{N}.$ Then by Proposition\ref{prop3point1}, we have $\langle f_i, g_i \rangle = \frac{N-1}{N},\;1 \leq i \leq N.$ This leads to, $\left\|  S_{F}^{-\frac{1}{2}}f_i    \right\|^2 + \langle f_i, h \rangle = \frac{N-1}{N},\;1\leq i \leq N.$ Thus, $\langle f_i, h \rangle = 0,\,\forall i$ and therefore, $h= 0.$ Hence the result follows. 
 \end{proof}

\section{Acknowledgment}
	 The authors are grateful to the Mohapatra Family Foundation and the College of Graduate Studies of the University of Central Florida for their support during this research.

\bibliographystyle{amsplain}

\end{document}